\numberwithin{equation}{section}
\def\Q{\mathbb Q}
\def\Z{\mathbb Z}
\def\P{\mathbb P}
\def\F{\mathbb F}
\def\E{\mathbb E}
\def\ee{\varepsilon}
\def\wt{\widetilde}
\newcommand{\wh}{\widehat}
\def\gcd{\operatorname{gcd}}
\newcommand{\cO}{\mathcal{O}}
\newcommand{\cP}{\mathcal{P}}
\newtheorem{theorem}{Theorem}[section]
\newtheorem{lemma}[theorem]{Lemma}
\newtheorem{proposition}[theorem]{Proposition}
\newtheorem{corollary}[theorem]{Corollary}
\newtheorem*{problem}{Problem}
\theoremstyle{remark}
\theoremstyle{definition}
\theoremstyle{remark}
\numberwithin{equation}{section}
\renewcommand{\a}{\alpha}
\renewcommand{\b}{\beta}
\newcommand{\e}{\varepsilon}
\begin{document}
	\title[Random polynomials with multiplicative coefficients]{A random polynomial with multiplicative coefficients is almost surely irreducible}
   \author{P\'eter P. Varj\'u}
\address{Centre for Mathematical Sciences, Wilberforce Road, Cambridge CB3 0WA, UK}
\email{pv270@dpmms.cam.ac.uk} 
	\author{Max Wenqiang Xu}
\thanks{MWX is supported by a Simons Junior Fellowship from Simons Foundation.}
\address{Courant Institute of Mathematical Sciences, 251 Mercer Street, New York 10012, USA}
\email{maxxu1729@gmail.com}

	\begin{abstract}
		Assume that the Riemann hypothesis holds for Dedekind zeta functions.
Under this assumption,
we prove that a degree $d$ polynomial with random multiplicative $\pm1$ coefficients is
irreducible in $\Z[x]$ with probability $1-O(d^{-1/2+\e})$.
	\end{abstract}
	\maketitle

 \section{Introduction}

The question of how likely it is that a random polynomial in $\Z[x]$
is irreducible has a long history.
The first studied model was where the degree of the polynomials is a fixed
number and the coefficients are sampled independently and uniformly from
growing intervals.
The is less relevant to our paper, and we only refer to the recent
breakthrough \cite{Bha} and its references.

Another setting that has gained momentum more recently is where the
coefficients are sampled independently from a fixed law and the degree
of the polynomials is growing.
A sequence of papers \cite{BSK}, \cite{BSKK}, \cite{Baz} established that such random polynomials are irreducible
with probability tending to $1$ if the common law of the coefficients
is uniform enough modulo $4$ primes, in particular when the coefficients are
uniformly distributed on $35$ consecutive elements.
See also \cite{BSHKP} for results about $\pm1$ coefficients and special degrees.
Using a different method, and assuming the Riemann hypothesis for Dedekind zeta
functions, \cite{BV19} proved that the probability that a random polynomial is
irreducible tends to $1$ requiring only the necessary condition that the constant
coefficient is not $0$.
This conditionally solved a conjecture of Odlyzko and Poonen \cite{OP93} and the method also yields better estimates for the probability that the
random polynomial is reducible.

In another direction, \cite{Ebe} and \cite{FJSS} proved irreducibility of the characteristic polynomial of random matrices with high probability.

In this paper, we consider other models where the coefficients of the random polynomial are not independent.
We define a sequence $X_n$ of $\pm1$ valued random variables for $n\in\Z_{\ge1}$
as follows. We let $X_1=1$ with probability $1$.
For primes $p$, we let $X_p$ be independent uniform random variables taking
$\pm1$ values. We consider two models. One is that for
 $n\in\Z_{\ge 2}$ we let $X_n=X_{p_1}\cdots X_{p_k}$, where
$n=p_1\cdots p_k$ is the prime factorization of $n$.
The other model is that $X_n$ is supported only on square-free integers $n$
defined in the same way,
and if $n$ is not square-free, we set $X_n=0$. 
For $d\in\Z_{\ge 0}$, and $\{X_n\}_{1\le n \le d}$ being either one of the above two models,
we define a random polynomial with multiplicative coefficients as
\[
P_d(X)=X_1x+X_2 x^{2}+\ldots+ X_{d} x^{d}.
\]

The main result of the paper is the following.

\begin{theorem} \label{thm: random}
	Suppose that the Riemann hypothesis holds for the Dedekind zeta functions
	of all number fields.
	Then for every $\e>0$, there is a constant $C=C(\e)$ such that 
	\[\P[P_d(x)/x~\text{is irreducible over $\Z$}  ] \ge 1- C d^{-1/2+\e}. \]
\end{theorem}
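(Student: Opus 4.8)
The plan is to adapt the method of \cite{BV19} to the dependent setting, the new ingredients being a conditioning device that restores independence and a direct treatment of the small factors. Write $f=P_d(x)/x$. It is primitive (as $f(0)=X_1=1$), has leading coefficient $\pm1$, and degree $n$, where $n=d-1$ in the completely multiplicative model and, in the square-free model, the appropriate deterministic value $\ge d-o(d)$; in particular $n\asymp d$ and irreducibility over $\Z$ is the same as over $\Q$. Since the coefficients of $f$ lie in $\{0,\pm1\}$ with $|f(0)|=1$, every complex root of $f$ has modulus in $[1/2,2]$, and $M(f)\le\|f\|_2\le\sqrt d$ by Landau's inequality; by multiplicativity of the Mahler measure every monic integer factor $A$ of $f$ then has $M(A)\le\sqrt d$, hence coefficients bounded by $2^{\deg A}\sqrt d$, so $f$ has at most $\exp(O((\deg A)^2+(\deg A)\log d))$ monic integer factors of prescribed degree. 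A reducible $f$ has an irreducible factor of degree $k$ for some $1\le k\le n/2$, and if $f$ has no cyclotomic factor then, by Kronecker's theorem, every irreducible factor of $f$ has a root off the unit circle. Hence
\[
\P[f\text{ reducible}]\le\P[f\text{ has a cyclotomic factor}]+\sum_{k=1}^{\lfloor n/2\rfloor}\P[f\text{ has a non-cyclotomic irreducible factor of degree }k],
\]
and the aim is to show the first term is $O(d^{-1/2+\e})$ (the dominant contribution) and the sum is smaller.

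For the independence device, condition on $\mathcal F:=\sigma(X_p:p\le\sqrt d)$. Each $m\le d$ is either $\sqrt d$-smooth, making $X_m$ $\mathcal F$-measurable, or has a unique prime factor $q>\sqrt d$, necessarily to the first power with $m/q<\sqrt d$, making $X_m$ equal to $X_q$ times an $\mathcal F$-measurable sign. Hence, with $q$ over primes in $(\sqrt d,d]$, $f=f_0+\sum_q X_q g_q$, where $f_0$ and $g_q:=\sum_{m\le d/q}X_m x^{mq-1}$ are $\mathcal F$-measurable while $\{X_q\}$ are i.i.d.\ uniform $\pm1$ and independent of $\mathcal F$; this holds in both models. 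Crucially $g_q=x^{q-1}$ for the $N\gg d/\log d$ primes $q\in(d/2,d]$, so conditionally on everything except $\{X_q:d/2<q\le d\}$ the polynomial $f$ is a fixed polynomial plus a random $\pm1$ combination of $N\gg d/\log d$ distinct monomials with well-spread exponents --- exactly the input the anti-concentration estimates of \cite{BV19} require.

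For the first term, $\Phi_m\mid f$ is equivalent to the lattice random walk $\sum_{d/2<q\le d}X_q\,(x^{q-1}\bmod\Phi_m)$ hitting a fixed point of $\Z[x]/(\Phi_m)\cong\Z^{\varphi(m)}$, after conditioning out the rest; since the exponents $q-1$ are numerous and well spread, $\varphi(m)$ of the vectors $x^{q-1}\bmod\Phi_m$ form a $\Z$-basis of $\Z[\zeta_m]$ up to bounded index, so a local-limit bound gives $\P[\Phi_m\mid f]\ll N^{-\varphi(m)/2+o(\varphi(m))}$, and $\P[\Phi_m\mid f]=0$ once $\varphi(m)>n$, leaving only $O(d\log\log d)$ relevant $m$. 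Summing over $m$, and using $\#\{m:\varphi(m)=k\}\ll k\log k$, the terms with $\varphi(m)\ge2$ contribute $\ll d^{-1+\e}$, while $m\in\{1,2\}$, i.e.\ $\P[f(\pm1)=0]=\P[\sum_{n\le d}(\pm1)^{n-1}X_n=0]\ll N^{-1/2}$, give the main term $O(d^{-1/2+\e})$; this one-dimensional anti-concentration is the source of the exponent $\tfrac12$. For non-cyclotomic irreducible factors $A$ of degree $k\le\sqrt d(\log d)^{-2}$ one uses a union bound over the $\exp(O(k^2+k\log d))$ candidates: each has, by a Dobrowolski-type bound, a root $\alpha$ with $\log|\alpha|\gg k^{-1}(\log k)^{-3}$, so the steps $x^{q-1}\bmod A$ take values $\alpha^{q-1}$ of wildly varying size, the walk in $\Z[x]/(A)$ spreads over volume $\gg M(A)^{d-o(d)}$, and $\P[A\mid f]\ll\exp(-\Omega(d(\log d)^{-3}))$, which beats the candidate count with room to sum over $k$.

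The main obstacle is the range $\sqrt d(\log d)^{-2}\le k\le n/2$, where the candidate count $\exp(\Omega(d))$ is hopeless for a union bound; here I would invoke the machinery of \cite{BV19}. A degree-$k$ factor of $f$ over $\Z$ constrains the factorization type of $f\bmod p$ --- every irreducible factor of $f\bmod p$ then has degree at most $n-k$ --- for every prime $p$, and, under the Riemann hypothesis for the Dedekind zeta function of the splitting field of $f$ (which has $\log|\mathrm{disc}(f)|=O(d\log d)$, so an effective Chebotarev estimate of Lagarias--Odlyzko type applies with $x=d^{O(1)}$), a random $f$ satisfying all such constraints occurs with probability $O(d^{-1/2+\e})$. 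The multiplicative structure complicates this, since the reductions $f\bmod p$ are very far from jointly equidistributed; but after the conditioning above the free part of $f$ is a genuine sparse random $\pm1$ polynomial, and the equidistribution and sieve inputs of \cite{BV19} can be re-derived in this conditioned model. Carrying the conditioning through the prime-counting arguments of \cite{BV19}, and checking their hypotheses for the sparse random $\pm1$ free part, is where I expect the real work to lie; separately, $f$ is shown to be separable with probability $1-O(d^{-1+\e})$ (binding case $(x\pm1)^2\mid f$, two independent linear conditions), which is absorbed into the error.
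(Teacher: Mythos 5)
Your conditioning step (splitting off the signs $X_q$ for large primes $q$, which for $q\in(d/2,d]$ multiply the single monomial $x^{q-1}$ and are independent of everything else) matches the paper's starting point, but after that the proposal has a genuine gap at its centre. The whole difficulty of the theorem sits in what you call the middle range $\sqrt d(\log d)^{-2}\le k\le n/2$, and there you only say you ``would invoke the machinery of \cite{BV19}'' and that re-deriving its equidistribution and sieve inputs in the conditioned model ``is where I expect the real work to lie.'' That work is the theorem: the paper does not split by factor degree at all, but proves (under RH for Dedekind zeta functions) that the expected number of distinct irreducible factors of $R=P_{d+1}(x)/x$ equals $1+O(d^{-1/2+\e})$, by comparing it, via the prime ideal theorem, with the expected number of roots of $R$ in $\F_q$ for random primes $q$ of size about $d^5$, and separately bounds the expected number of double roots in $\F_q$ to exclude proper powers. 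Your sketch for the middle range (constraints on the factorization type of $f\bmod p$ plus effective Chebotarev, yielding an asserted $O(d^{-1/2+\e})$) is not a proof and is not how \cite{BV19} proceeds, so the dominant part of the argument is missing.

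Moreover, the one technical point you wave through is exactly the paper's main new contribution. After conditioning, the free exponents are $q-1$ with $q$ prime in $(d/2,d]$; the Konyagin-style anti-concentration and equidistribution estimates underlying \cite{BV19} need the exponent set to contain (many) arithmetic progressions, not merely to be ``numerous and well-spread.'' The paper resolves this by extracting many disjoint short progressions of bounded common difference from the primes in $[d/2,d]$ using \cite{GT}, \cite{TZ}, \cite{Sha}, and then running Konyagin's argument on those progressions, simultaneously for the pair $(R(a),R'(a))$ so as to control double roots; without this (or a substitute) neither your claimed spreading bound for non-cyclotomic factors nor any treatment of large-degree factors goes through. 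A smaller but concrete error: in the cyclotomic step, for $4\mid m$ the exponents $q-1$ are even, so the vectors $x^{q-1}\bmod\Phi_m$ generate a proper sublattice of $\Z[\zeta_m]$ of rank strictly less than $\varphi(m)$ (rank $1$ for $m=4$, rank $2$ for $m=8$), so the asserted bound $N^{-\varphi(m)/2+o(\varphi(m))}$ is false as justified; since the target is only $d^{-1/2+\e}$ this particular slip is likely repairable, but it would need an actual argument.
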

We remark that in the model where $X_n$ is supported on square-free $n$, our bound is close to be sharp, as it is proved in \cite{AACKX25} that $x=1$ is a root with probability at least $Cd^{-1/2-\ee}$.

Polynomials with multiplicative coefficients are of great interest in number theory.
The study of their values on the unit circle has a vast literature. See \cite{BNR} and \cite{Har} for recent work in the setting of polynomials with
random multiplicative coefficients.

The question of irreducibility was recently studied in the setting of
Fekete polynomials in \cite{MNT23} and \cite{MNT24}.
For a prime $p$, the Fekete polynomial $F_p$ is defined
as
\[
F_p(x)=\sum_{a=1}^{p-1}\Big(\frac{a}{p}\Big) x^a,
\]
where $\big(\frac{a}{p}\big)$ denotes the Legendre symbol.
The authors of \cite{MNT23} have made the conjecture that $F_p(x)$
is a product of linear factors corresponding to possible roots at $-1,0,1$
and an irreducible polynomial for all $p$.

Motivated by this, we pose the following problem.
\begin{problem}
Let $f:\Z_{>0}\to \Z_{>0}$ be a function such that for all $d$, there is at least
one prime with $d<p\le f(d)$.
For $d\in\Z_{>0}$, let $p$ be a random prime in $(d,f(d)]$ sampled uniformly
and let
\begin{equation}\label{eq:Fekete-def}
F_{d,f}(x)= \sum_{a=1}^{d} \Big(\frac{a}{p}\Big) x^a
\end{equation}
be a random polynomial.
What is the asymptotic behaviour of the probability that $F_{d,f}$ is irreducible
after removing possible linear factors?
\end{problem}

The conjecture in \cite{MNT23} predicts that the probability in question is
$1$ when $d+1$ is a prime and $f(d)=d+1$.
If we allow $f(d)>2^{2\pi(d)}d^4$, then it is an immediate consequence of our
main result and known results about the distribution of the  Legendre symbol
that under the Riemann hypothesis for Dedekind zeta functions,
$F_{d,f}$ is irreducible with high probability.

\begin{corollary}\label{cr:pd}
	Let $F_{d, f}(t)$ be defined as in \eqref{eq:Fekete-def}
	and suppose $f(d)>2^{2\pi(d)}d^4$.
	Suppose that the Riemann hypothesis holds for Dedekind zeta functions
	of all number fields.
	Then for all $\e>0$, there is $C=C(\e)$ such that
	\[
	\P[{F_{d,f}(x)/x}~\text{is irreducible over $\Z$}]
	\ge 1 - C d^{-1/2+\e}.
	\]
\end{corollary}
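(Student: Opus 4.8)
The plan is to deduce Corollary~\ref{cr:pd} from Theorem~\ref{thm: random} for the completely multiplicative model (the one with $X_n=X_{p_1}\cdots X_{p_k}$), combined with the Riemann hypothesis used this time as a square-root cancellation bound for character sums. First I would set up the reduction. Since $p>d$, every coefficient $\big(\frac{a}{p}\big)$ with $1\le a\le d$ is $\pm1$, and since the Legendre symbol is completely multiplicative and $\pm1$-valued, the coefficient vector of $F_{d,f}(x)/x$ is obtained from the signs $\big(\frac{q}{p}\big)$ (over primes $q\le d$) by exactly the rule by which the coefficient vector of $P_d(x)/x$ is obtained from the signs $X_q$ in the first model --- namely, the coefficient of $x^{a-1}$ is the product of the relevant signs over the primes dividing $a$, counted with multiplicity. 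Writing $r:=\pi(d)$ and letting $B\subseteq\{\pm1\}^r$ be the set of sign patterns making that polynomial reducible over $\Z$, Theorem~\ref{thm: random} gives $|B|/2^r=\P[P_d(x)/x\text{ reducible}]\le C_1 d^{-1/2+\e}$, while $\P[F_{d,f}(x)/x\text{ reducible}]=\P_p[\big(\frac{q}{p}\big)_{q\le d}\in B]$, where $p$ is uniform in the primes of $(d,f(d)]$.

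Next I would expand, for a fixed pattern $\eta\in\{\pm1\}^r$, the indicator
\[
\mathbf{1}[(\tfrac{q}{p})=\eta_q\text{ for all }q\le d]=\prod_{q\le d}\frac{1+\eta_q(\frac{q}{p})}{2}=\frac{1}{2^r}\sum_{S}\Big(\prod_{q\in S}\eta_q\Big)\chi_S(p),
\]
where $S$ ranges over subsets of the primes $\le d$ and $\chi_S(p):=\prod_{q\in S}\big(\frac{q}{p}\big)=\big(\frac{m_S}{p}\big)$ with $m_S:=\prod_{q\in S}q$. By quadratic reciprocity, $p\mapsto\chi_S(p)$ is a real Dirichlet character modulo a divisor of $8m_S$, nonprincipal precisely when $S\ne\emptyset$ (as $m_S$ is squarefree). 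Summing over $\eta\in B$ and over primes $p\in(d,f(d)]$ and dividing by $N:=\pi(f(d))-\pi(d)$ gives
\[
\P[F_{d,f}(x)/x\text{ reducible}]=\frac{|B|}{2^r}+\frac{1}{2^rN}\sum_{S\ne\emptyset}c_S\sum_{d<p\le f(d)}\chi_S(p),\qquad c_S:=\sum_{\eta\in B}\prod_{q\in S}\eta_q.
\]
For each nonempty $S$, the Riemann hypothesis for the Dedekind zeta function of the quadratic field attached to $\chi_S$ (a special case of the hypothesis) gives $\sum_{d<p\le f(d)}\chi_S(p)=O\big(\sqrt{f(d)}\,(d+\log f(d))^2\big)$, by the explicit formula and partial summation; the term $d$ comes from $\log(8m_S)\le\log 8+\sum_{q\le d}\log q=O(d)$.

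To bound the error term I would use that the $c_S$ are the Fourier coefficients of $\mathbf{1}_B$ on $\{\pm1\}^r$: Parseval gives $\sum_S c_S^2=2^r|B|$, hence $\sum_S|c_S|\le 2^r\sqrt{|B|}$ by Cauchy--Schwarz, and the error term above is $O\big(E\sqrt{|B|}/N\big)$ with $E=O\big(\sqrt{f(d)}(d+\log f(d))^2\big)$. This bound is decreasing in $f(d)$, so it suffices to treat $f(d)=2^{2r}d^4$, where $\sqrt{f(d)}=2^rd^2$, $\log f(d)\ll r\ll d$, and $N\gg f(d)/\log f(d)$; with $\sqrt{|B|}\le 2^{r/2}C_1^{1/2}d^{-1/4+\e/2}$ this gives an error $O\big(2^{-r/2}\,\mathrm{poly}(d)\big)$, far smaller than $d^{-1/2+\e}$. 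Together with the main term $|B|/2^r\le C_1 d^{-1/2+\e}$, this yields the asserted bound.

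The only genuine obstacle is controlling the error term. One cannot afford to estimate the $2^r$ character sums individually using only $|c_S|\le|B|$: that produces an error of order $\sim d^{1/2+\e}/\log d$, which diverges, because the cancellation $\sqrt{f(d)}$ does not by itself beat $2^r$. What rescues the argument is the square-root gain $\sum_S|c_S|\le 2^r\sqrt{|B|}$ from Parseval, which effectively replaces $|B|\approx 2^r$ by $\sqrt{|B|}\approx 2^{r/2}$; this is affordable precisely because $f$ is required to be so large that $\sqrt{f(d)}>2^rd^2$ comfortably dominates $2^{r/2}$, which is exactly why the hypothesis on $f$ is so generous. Everything else is standard input: quadratic reciprocity, Chebyshev's estimate $\sum_{q\le d}\log q=O(d)$, the explicit-formula bound under the Riemann hypothesis, the passage from $\sum_n\chi_S(n)\Lambda(n)$ to $\sum_p\chi_S(p)$ by partial summation (the prime powers contributing $O(\sqrt{f(d)})$), and the observation that the Legendre-symbol coefficients have exactly the completely multiplicative form treated in Theorem~\ref{thm: random}.
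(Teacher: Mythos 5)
Your argument is correct, but it takes a genuinely different route from the paper's. The paper deduces the corollary from a result of Granville and Soundararajan (quoted as Theorem \ref{th:GS}): after replacing the uniform prime in $(d,f(d)]$ by a $\log p$-weighted prime in $[1,f(d)]$ (a change that only costs constant factors), that result shows that \emph{every} individual sign pattern $\big(\frac{r}{p}\big)_{r\le d}=(\omega_r)$ occurs with probability at most $C\,2^{-\pi(d)}$, and the corollary follows by summing this pointwise ($L^\infty$) bound over the bad set $B$ supplied by Theorem \ref{thm: random}. You instead expand the indicator of the event $B$ itself in Walsh/quadratic characters, bound each prime character sum $\sum_{d<p\le f(d)}\chi_S(p)$ directly under GRH (which, as you note, is subsumed by RH for the quadratic Dedekind zeta functions), and control the total Fourier mass via Parseval, $\sum_S|c_S|\le 2^{\pi(d)}\sqrt{|B|}$. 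This is self-contained (no appeal to Granville--Soundararajan), needs no reweighting of the prime, and yields a superpolynomially small error $2^{-\pi(d)/2}\mathrm{poly}(d)$ rather than merely another $O(d^{-1/2+\e})$ term; your reduction of the Fekete coefficients to the completely multiplicative model and the monotonicity reduction to $f(d)=2^{2\pi(d)}d^4$ are both fine. One side remark of yours is inaccurate, though it does not affect your proof: the claim that the ``naive'' bound $|c_S|\le|B|$ cannot work. The apparent divergence $d^{1/2+\e}/\log d$ arises only because the bound $O\big(\sqrt{f(d)}(d+\log f(d))^2\big)$ for the unweighted prime sum is lossy by a factor of about $\log f(d)$; doing the partial summation carefully (or weighting primes by $\log p$, as the paper does) recovers that factor, after which the naive estimate gives exactly the paper's bound $C|B|2^{-\pi(d)}\le C'd^{-1/2+\e}$. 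So the Parseval step is a genuine strengthening, and it explains why such a generous lower bound on $f(d)$ is more than enough, but it is not actually necessary for the stated result.
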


It would be interesting to see what the behaviour is when the range from which
the prime is sampled is shorter.

\subsection{Outline of the proof}

The proof of Theorem \ref{thm: random} follows the strategy of \cite{BV19}, which
we briefly recall.
Fix a polynomial $P$, and chose a random prime $q$ with a suitably chosen
probability distribution.
It is a consequence of the prime ideal theorem that if $P$ is irreducible, then
it has on average $1$ root in $\F_q$.
For different irreducible polynomials these roots rarely coincide, so we can
deduce that
\begin{align}\label{eq:PIT}
\{\text{number of distinct irreducible} &\text{ factors of $P$}\}\nonumber\\
&\approx \E_q[\text{number of roots of $P$ in $\F_q$}],
\end{align}
where $P$ is a fixed polynomial and the averaging is over a random prime $q$.

If we take a random polynomial $P$, and show that it has on average
$1$ root in $\F_q$, now $P$ and $q$ are both random, then it follows
that $P$ is a power of a single irreducible polynomial with high probability.
To show this, we fix a prime $q$ and a residue $a\in\F_q$, and show that the
value $P(a)$ is equidistributed in $\F_q$ for our random $P$.
In particular, $P(a)=0\in\F_q$ will occur with probability approximately $1/q$.
Summing this up for $a$ and averaging over $q$ will give the required result. 

In the setting of \cite{BV19}, the equidistribution of $P(a)$ in $\F_q$
is related to a Markov chain introduced by Chung, Diaconis and Graham \cite{CDG87}.
Due to the dependence of the coefficients, the equidistribution problem
cannot be described by a Markov chain in our setup.

Proving equidistribution is the main new contribution of our paper. We do this by conditioning on the values of the coefficients $X_p$ for primes $p<d/2$, and use that the coefficients corresponding to the
remaining primes are indepenedent from each other and from the coefficients
whose values are influenced by the smaller primes.
We build on \cite{Kon} (also used in \cite{BV19})
to prove equidistribution of
\[
\sum_{p\ge d/2\text{ prime}} X_p a^{p-1}.
\]
The key difference is that we are summing over the primes, and the argument
in \cite{BV19} requires an arithmetic progression.
We bypass this issue by finding many disjoint short arithmetic progressions
in the primes using known results from \cite{GT}, \cite{TZ}, \cite{Sha}
and then apply a version of the argument in \cite{BV19} for these.
This idea has been inspired by \cite{Bou}.

This allows us to prove equidistribution of $P(a)$ for most values of $a$.
For some of the remaining values, equidistribution may fail.
It certainly does for $a=-1,0,1$ and possibly also for some other low degree
roots of unity.
For the exceptional residues, we only prove an upper bound of the form
$\P[P(a)=0]\le C d^{-1/2+\e}$ using a classical Littlewood-Offord type bound.
This is where the error term in Theorem \ref{thm: random} comes from.
Here a more precise analysis may yield a better bound giving a more precise
estimate for the probability that $P$ is reducible.
It may be possible to adapt the arguments in \cite{BV19} to give stronger
estimates for most of the exceptional residues, and the error term
may potentially be dominated by the probability that $P$ is divisible by $x-1$
or $x+1$. 
We do not pursue this question.

After this, it remains to show that $P$ is not a proper power of an irreducible
polynomial with high probability.
We have not been able to adapt the corresponding argument in \cite{BV19}
to our setting.
Instead, we extend the equidistribution result to the pair
$(P(a),P'(a))$.
This allows us to show that $P$ has very few
double roots in $\F_q$ on average (much less than $1$),
and then the main argument can
be used to rule out repeated factors of $P$ in $\Z[x]$.

\subsection{Organization of the paper}

In Section \ref{sc:PIT}, we formulate a statement that makes \eqref{eq:PIT}
precise.
This is a straightforward adaptation of \cite{BV19}, but the result formulated
in \cite{BV19} cannot be applied as a black box.
Section \ref{sc:equidistribution} contains our equidistribution estimate for
random walks with steps $\pm a^i$ where $i$ runs through an index set
that contains sufficiently many disjoint arithmetic progressions.
We use these estimates to derive bounds for the expected number of roots
of $P$ in a finite field in Section \ref{sc:number-of-roots}.
We finish the paper by completing the proofs of the main results in Sections
\ref{sc:proof-thm} and \ref{sc:proof-cor}.

\subsection{Notation}
The letters $c$ and $C$ denote positive constants whose values may change
from one occurrence to the other.

\subsection{Acknowledgement}
We would like to thank Jacob Fox,  Julian Sahasrabudhe, Peter Sarnak and Joni Ter\"av\"ainen for helpful discussions on the subject of this paper. Part of the work was done during several visits of MWX at Cambridge University, and the warm hospitality is greatly appreciated.

\section{Expected number of roots of a polynomial in a random finite field}
\label{sc:PIT}

Given a number field $K$, we denote by $\cO_K$ its ring of integers and
write $\zeta_K$ for its Dedekind zeta function.
We write $A_K(n)$ for the number of prime ideals $\mathfrak{p}\subset\cO_K$
with norm $N_{K/\Q}(\mathfrak{p})=n$.

For a number $X>1$, we write
\[
h_X(u)=
\begin{cases}
2\exp(-X) & \text{if $u\in(X-\log 2,X]$,}\\
0 &\text{otherwise}.
\end{cases}
\]

Given a polynomial $P\in\Z[x]$ and a rational prime $q$,
we write $B_P(q)$ for the number of distinct roots of $P$
in $\F_q$.
We write $\wt P$ for the product of the irreducible factors
of $P$ in $\Z[x]$, and we write $\Delta_P$ for the discriminant of $P$.
Given an irreducible polynomial $Q\in\Z[x]$, we write $K_Q=\Q(\a)$,
where $\a$ is a root of $Q$.

The purpose of this section is to prove the following result.

\begin{proposition}\label{pr:number-distinct-factors}
Let $d,M\in\Z_{\ge 1}$.
Let $P\in\Z[x]$ be a polynomial of degree at most $d$
with coefficients of absolute value at most $M$.
Suppose that for every irreducible factor $Q$ of $P$,
RH holds for $\zeta_{K_Q}$.
Let $X\ge 1$.
Then
\begin{align*}
\sum_{\text{$q$ prime}} B_P(q)\log(q)h_X(\log q)
&=|\{\text{distinct irreducible factors of $P$}\}|\\
&\phantom{=}+ O(d^2 X^2\log (dM)\exp(-X/2)).
\end{align*}
The implied constant is absolute.
\end{proposition}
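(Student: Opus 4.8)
The plan is to express $B_P(q)$ in terms of the factorization behaviour of the irreducible factors of $P$ modulo $q$, and then apply an explicit prime ideal theorem to each number field $K_Q$. First, I would reduce to the squarefree case: since $\wt P$ is the product of the distinct irreducible factors of $P$, we have $B_P(q) = B_{\wt P}(q)$ for every $q$, and the number of distinct irreducible factors of $P$ equals the number of irreducible factors of $\wt P$; so we may assume $P = \wt P$ is squarefree, at the cost of noting that the coefficients of $\wt P$ and its degree are controlled by $d$ and $M$ via Mignotte-type bounds on factors (this affects only the $\log(dM)$ in the error term). For a prime $q$ not dividing $\Delta_P$, the roots of $P$ in $\F_q$ are in bijection with the degree-one primes of $\cO_{K_Q}$ above $q$, summed over the irreducible factors $Q$ of $P$; that is, $B_P(q) = \sum_{Q\mid P} A_{K_Q}(q)$ for all but finitely many $q$, and for the finitely many $q \mid \Delta_P$ the discrepancy is $O(d)$ per prime.

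Next I would insert this into the sum and swap the order of summation:
\[
\sum_{q} B_P(q)\log q\, h_X(\log q)
= \sum_{Q\mid P}\ \sum_{q} A_{K_Q}(q)\log q\, h_X(\log q) + (\text{error from }q\mid\Delta_P).
\]
For each fixed irreducible factor $Q$, the inner sum is a smoothed prime-ideal counting sum for $K_Q$, localized to norms in the dyadic-type window $(e^{X}\cdot\tfrac12, e^{X}]$ with the weight $h_X$ normalized so that $\int h_X = $ (something close to) $1$ after accounting for the $\log q$ and the density of primes. The explicit formula for $\zeta_{K_Q}$ under RH gives
\[
\sum_{q} A_{K_Q}(q)\log q\, h_X(\log q) = \widehat{h_X}(0) + O\big(\deg(Q)\cdot X \cdot \|h_X\|\big)\cdot(\text{size of the critical-line contribution}),
\]
where the main term $\widehat{h_X}(0)$ is arranged to equal $1$ (this is the point of the specific definition of $h_X$ with its factor $2\exp(-X)$ and width $\log 2$), and the error term involves a sum over the nontrivial zeros of $\zeta_{K_Q}$. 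Under RH those zeros lie on $\mathrm{Re}(s)=1/2$, and standard bounds on the number of zeros up to height $T$ for $\zeta_{K_Q}$ — the count being $O(\deg(K_Q)(\log T + \log|\Delta_{K_Q}|) )$ — combined with $\log|\Delta_{K_Q}| = O(\deg(Q)\log(dM))$ (Minkowski/Mahler-measure bound on discriminants of factors of $P$) yield a per-field error of size $O(\deg(Q)^2 X \log(dM)\exp(-X/2))$, the $\exp(-X/2)$ coming from the $1/2$ in the exponent on the critical line against a window of multiplicative width around $e^X$. Summing over the factors $Q$ and using $\sum_Q \deg(Q) \le d$, hence $\sum_Q \deg(Q)^2 \le d^2$, and absorbing the $O(d)$-per-prime error from $q \mid \Delta_P$ (there are $O(\log\Delta_P) = O(d\log(dM))$ such primes, but they are weighted by $h_X$ which is $O(\exp(-X))$ and supported on an interval of length $\log 2$, contributing $O(d^2\log(dM)\exp(-X))$) gives the claimed bound with the main term $|\{\text{distinct irreducible factors}\}|$.

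The main obstacle is the bookkeeping in the explicit formula for $\zeta_{K_Q}$: one must carefully track how the smoothed sum $\sum_q A_{K_Q}(q)\log q\, h_X(\log q)$ decomposes, in particular (i) that the \emph{degree-one} prime condition, as opposed to all prime ideals, does not change the main term but only contributes to the error (higher-degree primes have norm $q^f$ with $f\ge 2$, so their contribution to a window near $e^X$ is governed by primes near $e^{X/f} \le e^{X/2}$, again absorbed), and (ii) making the dependence on $K_Q$ fully explicit, i.e. replacing the conductor-discriminant quantities by bounds purely in terms of $\deg(Q)$ and $\log(dM)$. The paper signals that this is "a straightforward adaptation of \cite{BV19}" but cannot be quoted verbatim, so the work is in re-deriving the explicit-formula estimate with the error term shaped as $d^2 X^2\log(dM)\exp(-X/2)$ rather than whatever normalization \cite{BV19} used; the extra factor of $X$ (giving $X^2$ in the statement) presumably comes from a cruder bound on the zero-sum tail or from summing a geometric-type series in dyadic blocks, and I would not try to optimize it.
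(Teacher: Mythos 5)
Your proposal is correct and follows essentially the same route as the paper: express $B_P(q)=\sum_{Q\mid P}A_{K_Q}(q)$ away from primes dividing $\Delta_{\wt P}$, bound the contribution of the finitely many bad primes trivially, and apply the RH-conditional quantitative prime ideal theorem to each factor with the discriminant bounds $\log|\Delta_{K_Q}|=O(d\log(dM))$ and at most $d$ factors. The only difference is that you propose to re-derive the explicit-formula estimate from scratch, whereas the paper simply invokes the standard smoothed PIT (Proposition \ref{pr:prime-number-theorem}, quoted from \cite{BV19}), whose error term $O(X^2\log|\Delta|\exp(-X/2))$ already has exactly the shape needed, so no zero-counting bookkeeping is required.
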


The proof of this follows \cite{BV19}*{Proposition 19}.
We begin by recalling a quantitative version of the prime ideal theorem under
the Riemann hypothesis.
This is standard, and this precise formulation can be found in
\cite{BV19}*{Proposition 9}.

\begin{proposition}\label{pr:prime-number-theorem}
Let $K$ be a number field with discriminant $\Delta$, and suppose
RH holds for $\zeta_K$.
Let $X>1$.
Then
\[
\sum_{\text{$q$ prime}} A_K(q)\log(q) h_X(\log q)
=1+O(X^2\log|\Delta|\exp(-X/2)),
\]
where the implied constant is absolute.
\end{proposition}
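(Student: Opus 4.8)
This is the standard effective prime ideal theorem under RH, recorded as \cite{BV19}*{Proposition 9}; I sketch the derivation. The plan is to unfold the weight $h_X$, pass from the sum over rational primes to the Chebyshev function of $K$, and then quote the explicit-formula estimate. First, since $h_X(\log q)$ is supported on $e^X/2<q\le e^X$, where it equals $2e^{-X}$, the left-hand side equals $2e^{-X}\sum_{e^X/2<q\le e^X}A_K(q)\log q$; thus it suffices to prove
\[
\sum_{e^X/2<q\le e^X}A_K(q)\log q=\frac{e^X}{2}+O\bigl(X^2\log|\Delta|\,e^{X/2}\bigr).
\]

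Next I would compare $\sum_{q\le y}A_K(q)\log q$ with $\psi_K(y)=\sum_{N(\mathfrak a)\le y}\Lambda_K(\mathfrak a)$, the sum of $\log N(\mathfrak p)$ over prime-power ideals $\mathfrak a=\mathfrak p^k\subset\cO_K$ of norm at most $y$. The quantity $\sum_{q\le y}A_K(q)\log q$ is precisely the contribution of the degree-one primes of prime norm; the omitted terms — primes of residue degree $\ge 2$ and higher prime powers — all have norm a perfect power $\ge 2$, so, using $\sum_{\mathfrak p\mid p}e_{\mathfrak p}f_{\mathfrak p}=n_K$ and Chebyshev's bound $\theta(x)\ll x$, they contribute $O(n_K\sqrt y\,\log y)$. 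Hence $\sum_{q\le y}A_K(q)\log q=\psi_K(y)+O(n_K\sqrt y\,\log y)$.

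Now I would invoke the RH-conditional prime ideal theorem for $\zeta_K$ via the explicit formula (Lagarias--Odlyzko): $\psi_K(y)=y+O\bigl(\sqrt y(\log y)(\log|\Delta|+n_K\log y)\bigr)$ with an absolute implied constant. Subtracting the estimates at $y=e^X$ and $y=e^X/2$, and absorbing the $n_K\sqrt y\log y$ error, gives $\sum_{e^X/2<q\le e^X}A_K(q)\log q=e^X/2+O\bigl(e^{X/2}X(\log|\Delta|+n_KX)\bigr)$. Finally, Minkowski's discriminant bound gives $n_K\ll\log|\Delta|$ with an absolute constant, so $\log|\Delta|+n_KX\ll X\log|\Delta|$ for $X\ge 1$; multiplying back by $2e^{-X}$ yields the proposition.

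The only genuinely non-elementary input is the RH-conditional explicit-formula bound for $\psi_K$ with an \emph{absolute} implied constant, which is classical and which I would simply quote rather than reprove. The only care needed elsewhere is to keep the implied constants in the prime-power/higher-degree correction and in the Minkowski estimate independent of $K$, which is what lets the final implied constant be absolute.
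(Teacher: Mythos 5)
Your argument is correct and is essentially the standard derivation behind the result, which the paper itself does not reprove but simply quotes from \cite{BV19}*{Proposition 9}: unfold the weight $h_X$, compare the degree-one prime sum with $\psi_K$, and apply the RH-conditional Lagarias--Odlyzko explicit-formula bound together with Minkowski's $n_K\ll\log|\Delta|$, exactly as in the cited source. The only (harmless) caveat is the case $K=\Q$, where $\log|\Delta|=0$ so both your Minkowski step and the stated error term degenerate; the statement is implicitly read with $\log|\Delta|$ replaced by $\max(\log|\Delta|,1)$, consistent with the paper's own application of the proposition to $K=\Q$ in Section \ref{sc:proof-thm}.
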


\begin{lemma}\label{lm:A-B}
Let $P\in\Z[x]$, and let $q$ be a rational prime with $q\nmid\Delta_{\wt P}$.
Then
\begin{equation}\label{eq:A-B}
B_P(q)=\sum_{\text{$Q|P$ irreducible}} A_{K_Q}(q).
\end{equation}
\end{lemma}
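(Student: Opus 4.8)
The plan is to exhibit an explicit bijection between the roots of $P$ in $\F_q$ and the degree-one prime ideals over the various $K_Q$, lying above $q$. First I would reduce to the case where $P=\wt P$ is squarefree: the left side $B_P(q)$ and the right side both depend only on the set of irreducible factors of $P$, not their multiplicities, so we may assume $P=\prod_Q Q$ with the $Q$ distinct irreducibles. Then I would observe that $B_P(q)=\sum_Q B_Q(q)$ provided the factors have no common root mod $q$; this last point follows because a common root would force $q\mid\mathrm{Res}(Q_1,Q_2)\mid\Delta_P$, contradicting $q\nmid\Delta_{\wt P}$. Hence it suffices to prove the identity for a single irreducible $P=Q$, i.e.\ that $B_Q(q)=A_{K_Q}(q)$ whenever $q\nmid\Delta_Q$ (note $q\nmid\Delta_{\wt Q}=\Delta_Q$ since $Q$ is already squarefree, being irreducible over a field of characteristic $0$).

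For the single irreducible case, let $\a$ be a root of $Q$ in $\overline\Q$ and $K_Q=\Q(\a)$. The key classical input is the Dedekind--Kummer theorem: since $q\nmid\Delta_Q$, the prime $q$ is unramified in $K_Q$ and does not divide the index $[\cO_{K_Q}:\Z[\a]]$, so the factorization of $q$ in $\cO_{K_Q}$ mirrors the factorization of $Q \bmod q$ in $\F_q[x]$. The number of degree-one primes above $q$, which is exactly $A_{K_Q}(q)$, therefore equals the number of distinct linear factors of $Q\bmod q$ in $\F_q[x]$, which is the number of distinct roots of $Q$ in $\F_q$, namely $B_Q(q)$. Summing over the irreducible factors $Q\mid P$ recovers \eqref{eq:A-B}.

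The only real care needed is the bookkeeping about discriminants: I should check that $q\nmid\Delta_{\wt P}$ genuinely implies both $q\nmid\Delta_Q$ for each irreducible factor $Q$ and $q\nmid\mathrm{Res}(Q_i,Q_j)$ for distinct factors, which follows from the standard factorization of the discriminant of a squarefree polynomial as $\Delta_{\wt P}=\pm\prod_Q\Delta_Q\cdot\prod_{i<j}\mathrm{Res}(Q_i,Q_j)^2$ (up to leading-coefficient factors, which are also coprime to $q$ once $q\nmid\Delta_{\wt P}$ forces $q$ not to divide the leading coefficient). That is the main — though still routine — obstacle; everything else is a direct appeal to Dedekind--Kummer.
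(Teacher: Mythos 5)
Your overall route is the standard one and, as far as one can tell, the same as the argument the paper relies on: the paper gives no self-contained proof but defers to \cite{BV19}*{Proposition 16} (with an empty set of exceptional polynomials), and that argument is precisely your reduction to the distinct irreducible factors plus Dedekind--Kummer, with $q\nmid\Delta_{\wt P}$ used both to keep the roots of distinct factors from colliding mod $q$ (via the resultants in $\Delta_{\wt P}=\pm\prod_Q\Delta_Q\prod_{i<j}\mathrm{Res}(Q_i,Q_j)^2$) and to make Dedekind's theorem applicable to each factor, giving $B_Q(q)=A_{K_Q}(q)$.

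One assertion in your bookkeeping is false as stated: $q\nmid\Delta_{\wt P}$ does \emph{not} force $q\nmid\mathrm{lc}(\wt P)$. For instance $\wt P(x)=qx^2+x=x(qx+1)$ has $\Delta_{\wt P}=1$, yet $q\mid\mathrm{lc}(\wt P)$; here $B_{\wt P}(q)=1$ while the right-hand side of \eqref{eq:A-B} equals $2$ (both factors generate $\Q$), so a leading-coefficient hypothesis is genuinely needed --- both for your argument (you need each $Q$ monic, or at least $q\nmid\mathrm{lc}(Q)$, to invoke Dedekind--Kummer with the order $\Z[\a]$, and you need the degree of $\wt P$ not to drop mod $q$) and, strictly speaking, for the lemma in the generality in which it is phrased. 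In the paper this is immaterial: the lemma is only applied to polynomials with coefficients in $\{0,\pm1\}$, whose leading coefficient is $\pm1$, so by Gauss's lemma every irreducible factor is monic up to sign and your proof goes through verbatim (the same remark disposes of the content issue in your reduction from $P$ to $\wt P$). You should either add the hypothesis $q\nmid\mathrm{lc}(P)$ explicitly, or pass to the monicization $\mathrm{lc}(Q)^{\deg Q-1}Q(x/\mathrm{lc}(Q))$ of each factor, rather than trying to derive coprimality of $q$ with the leading coefficient from $q\nmid\Delta_{\wt P}$.
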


This lemma is standard.
It is closely related to the $m=1$ case of \cite{BV19}*{Proposition 16}.
The difference compared to that result is that there the roots of certain
exceptional polynomials are not counted in $B_P(p)$ and they are also not
counted on the right hand side of \eqref{eq:A-B}.
While this is not formally permitted in \cite{BV19}, the proof works verbatim
if we take the empty set for the exceptional polynomials.

\begin{proof}[Proof of Proposition \ref{pr:number-distinct-factors}]
We first estimate $\Delta_P$.
It is represented by a determinant of size $2d-1$ with entries bounded by $dM$
(divided by the leading coefficient).
Therefore,
\[
|\Delta_P|\le (2d-1)^{2d-1} (dM)^{2d-1}\le (2d)^{4d} M^{2d}.
\]
This is also an upper bound for $\Delta_{\wt P}$ and
the discriminants for all the number fields that
we may obtain by adjoining a root of $P$ to $\Q$.

If $q\nmid \Delta_{\wt P}$ is a prime, then
\[
B_P(q)=\sum_{\text{$Q|P$ irreducible}} A_{K_Q}(q)
\]
by Lemma \ref{lm:A-B}.
If $q|\Delta_{\wt P}$, then we just use the trivial bounds $0 \le B_P(q) \le d$, and $0\le \sum_{\text{$Q|P$ irreducible}} A_{K_Q}(q) \le d$ to deduce that
\[
\Big|B_P(q)-\sum_{\text{$Q|P$ irreducible}} A_{K_Q}(q)\Big|\le d,
\]
and we have at most
\[
\frac{\log |\Delta_{\wt P}|}{X-\log 2}\le \frac{10 d(\log dM)}{X}
\]
number of primes $q$ for which this holds.

Therefore,
\[
\sum_{\text{$q$ prime}}\Big|B_P(q)-\sum_{\text{$Q|P$ irreducible}} A_{K_Q}(q)\Big|
\log (q) h_X(\log q)
\le \frac{C d^2(\log dM)}{\exp(X)}
\]
for an absolute constant $C$.

Using Proposition \ref{pr:prime-number-theorem} to estimate the sums of $A_{K_Q}(q)$,
we get
\begin{align*}
\Big|\sum_{\text{$q$ is prime}} &B_P(q)\log(q)h_X(\log q)
-|\{\text{distinct irreducible factors of $P$}\}|\Big|\\
&\le Cd X^2\log((2d)^{4d}M^{2d})\exp(-X/2)+Cd^2(\log dM)\exp(-X)\\
&\le C d^2 X^2\log (dM)\exp(-X/2).
\end{align*}
\end{proof}

\section{Equidistribution estimate}
\label{sc:equidistribution}

We denote by $M(P)$ the Mahler measure of a polynomial $P\in\Z[x]$.
Let $l\in\Z_{\ge 1}$ and let $q$ be a prime.
We say that a polynomial $P$ is $(l,q)$-exceptional if $\deg(P)\le l$ and $M(P)\le q^{1/(l+1)^2}$.
If $q$ is a prime, then an element of $\F_q$ is $l$-exceptional if it
is the root of an $(l,q)$-exceptional polynomial.

The purpose of this section is to prove the following result and a weaker
estimate that is valid for all non-zero residues, which we formulate at the
end of the section.

\begin{proposition}\label{pr:unexceptional}
Let $l,d\in\Z_{\ge 3}$ and let $q$ be a prime that is suitably large in
terms of $l$.
Let $K\in\Z_{\ge1}$.
Let $a\in\F_q$ be an element such that $a^k$
is not $l$-exceptional for any $k=1,\ldots,K$.
Let $I\subset [0,\ldots,d]$ be a set that contains $q^{5/(l+1)}$ pairwise disjoint
arithmetic progressions of length $3l^3$ with common difference at most $K$.
Let $X_i$ be independent uniform $\pm1$ valued random variables, and
let
\[
Y=\sum_{i\in I}X_i(a^i,ia^{i-1})
\]
be a random element of $\F_q^2$.
Then
\[
\Big|\P[Y=x]-\frac{1}{q^2}\Big|< q^{-10}
\]
for all $x\in\F_q^2$.
\end{proposition}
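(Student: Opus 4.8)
\emph{Fourier reduction.} The plan is to run the standard character-sum argument on $\F_q^2$. Writing $\mu$ for the law of $Y$ and $e(t):=e^{2\pi\mathrm i t}$, one has for every $x=(x_1,x_2)\in\F_q^2$ the identity
\[
\P[Y=x]-\frac1{q^2}=\frac1{q^2}\sum_{(\xi,\eta)\ne(0,0)}\widehat\mu(\xi,\eta)\,e\!\left(\frac{-\xi x_1-\eta x_2}{q}\right),\qquad
\widehat\mu(\xi,\eta)=\prod_{i\in I}\cos\!\left(\frac{2\pi(\xi a^i+\eta ia^{i-1})}{q}\right),
\]
the factorisation coming from independence of the $X_i$ and $\E[e(X_it/q)]=\cos(2\pi t/q)$. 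Since there are only $q^2-1<q^2$ nonzero frequencies, it suffices to prove $|\widehat\mu(\xi,\eta)|\le q^{-10}$ for each $(\xi,\eta)\ne(0,0)$. Put $L_i:=\xi a^i+\eta ia^{i-1}\in\F_q$. Applying the elementary inequality $|\cos(\pi x)|\le e^{-c\|x\|^2}$ ($\|\cdot\|$ the distance to $\Z$, $c>0$ absolute) with $x=2L_i/q$, and using that $(\xi,\eta)\mapsto(2\xi,2\eta)$ permutes the nonzero frequencies (as $q$ is odd), the matter reduces to
\[
\sum_{i\in I}\Big\|\frac{L_i}{q}\Big\|^2\ \ge\ \frac{10}{c}\,\log q\qquad\text{for all }(\xi,\eta)\ne(0,0).
\]

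\emph{Reduction to one progression.} Fix $(\xi,\eta)\ne(0,0)$. Note $a\ne0$, for otherwise $0$ is a root of $x$, which is $(l,q)$-exceptional since $M(x)=1\le q^{1/(l+1)^2}$, contradicting that $a=a^1$ is not $l$-exceptional. Let $P=\{i_0+tr:0\le t<3l^3\}\subseteq I$ be one of the $q^{5/(l+1)}$ pairwise disjoint progressions furnished by hypothesis, with $1\le r\le K$. On $P$ one has
\[
L_{i_0+tr}=a^{i_0-1}\,(a^r)^t\,\big(\xi a+\eta i_0+(\eta r)t\big),\qquad 0\le t<3l^3,
\]
so $(L_i)_{i\in P}$ is the unit $a^{i_0-1}$ times a geometric sequence of ratio $a^r$ times an affine function of $t$, and since $(\xi,\eta)\ne(0,0)$ this affine function is not identically zero provided $q\nmid r$, which I henceforth assume (it is automatic in our intended applications, where $q>K$; the case $q\mid r$ is addressed at the end). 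The crux is the claim: \emph{there is $c_0=c_0(l)>0$ such that if $a^r$ is not $l$-exceptional, then $\sum_{i\in P}\|L_i/q\|^2\ge c_0$.} Granting this, the Proposition follows at once, since by hypothesis $a^k$ is not $l$-exceptional for $1\le k\le K$, in particular $k=r$; applying the claim to each of the $q^{5/(l+1)}$ disjoint progressions and summing gives $\sum_{i\in I}\|L_i/q\|^2\ge c_0(l)\,q^{5/(l+1)}\ge\frac{10}{c}\log q$ whenever $q$ is large in terms of $l$.

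\emph{Proof of the claim --- the main obstacle.} I would prove this by contraposition, following the method of Konyagin as in \cite{Kon} and \cite{BV19}. Choosing $c_0=c_0(l)$ small enough, the hypothesis $\sum_{i\in P}\|L_i/q\|^2<c_0$ makes every residue $L_i$ ($i\in P$) have $\|L_i/q\|<\sqrt{c_0}$, a small quantity depending only on $l$. Passing to an appropriate sub-progression of $P$ realises $(L_i)$ on it as a unit times the values of a \emph{pure} geometric sequence of ratio $b:=a^{rs}$ for some step $s$, and one then runs Konyagin's iteration: exploiting the recursive structure of the sequence (rather than naive pigeonhole, which would only yield a relation of height $\sim q^{1/l}$), an iterated pigeonhole over bounded-coefficient integer combinations of these $\sim l^3$ small residues produces a nonzero $g\in\Z[x]$ with $\deg g\le l$ and $M(g)\le q^{1/(l+1)^2}$ satisfying $g(a^r)\equiv0\pmod q$; then $a^r$ is $l$-exceptional, contrary to hypothesis. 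The feature not present in \cite{BV19} is the affine factor $\xi a+\eta i_0+(\eta r)t$ coming from the derivative coordinate $ia^{i-1}$: it costs one extra degree in the interpolation, which the superlinear length $3l^3$ of the progressions is designed to absorb, and pushing Konyagin's iteration through in its presence --- keeping the output degree $\le l$ and height $\le q^{1/(l+1)^2}$ --- is where I expect the real work to lie. Finally, if $q\mid r$ for some progression, the factorisation shows $(L_i)_{i\in P}$ is a pure geometric sequence times the constant $\xi a+\eta i_0$, to which the one-dimensional version of the argument applies directly unless $\xi a+\eta i_0\equiv0\pmod q$; in that remaining case $L\equiv0$ on $P$, forcing $i_0$ into a single residue class mod $q$, which occurs for $O(d/q)$ progressions --- negligible in the relevant ranges, and absent entirely when $q>K$.
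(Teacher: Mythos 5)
Your outer architecture is the same as the paper's (Fourier factorization into cosines, then a lower bound on $\sum_{i\in P}\|L_i/q\|^2$ extracted from each of the $q^{5/(l+1)}$ disjoint progressions via a Konyagin-type argument), but the crux claim you isolate is both left unproved and false as stated, so there is a genuine gap. You claim a constant $c_0(l)>0$ such that $\sum_{i\in P}\|L_i/q\|^2\ge c_0$ on every progression whenever $a^r$ is not $l$-exceptional. This fails: take $\eta=0$, write $b=a^r$, $N=3l^3$, and consider the lattice $\{(x_0,\dots,x_{N-1})\in\Z^N:\ x_t\equiv x_0b^t\ (\mathrm{mod}\ q)\ \text{for all } t\}$, which has determinant $q^{N-1}$. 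Minkowski's theorem gives a nonzero point with all $|x_t|\le q^{(N-1)/N}<q$; necessarily $c:=x_0\not\equiv0$, and the nonzero frequency $(\xi,\eta)=(ca^{-i_0},0)$ then has $\|L_i/q\|\le q^{-1/N}$ for every $i\in P$, so $\sum_{i\in P}\|L_i/q\|^2\le 3l^3q^{-2/(3l^3)}\to0$ regardless of whether $a^r$ is exceptional. What non-exceptionality actually rules out is all the representatives $|L_i|$ being below $q^{1-2/(l+1)}/(l+1)$ (the paper's Lemma \ref{lm:large-element}), which yields a per-progression contribution of only $q^{-4/(l+1)}/(l+1)^2$; this weaker bound still closes the argument, since $q^{5/(l+1)}\cdot q^{-4/(l+1)}/(l+1)^2=q^{1/(l+1)}/(l+1)^2\gg\log q$, but it is exactly what has to be proved, and your contraposition from a constant threshold cannot work: with residues only known to be $O(\sqrt{c_0}\,q)$ rather than $<q^{1-2/(l+1)}/(l+1)$, the pigeonholed congruences $\sum_j\alpha_jb_{j+j_0}\equiv0\ (\mathrm{mod}\ q)$ cannot be forced to vanish exactly over $\Z$, which is the engine of Konyagin's method.

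Your sketch of how to absorb the derivative coordinate is also off the path that works. Restricting to a sub-progression does not turn $a^{i_0-1}(a^r)^t(\xi a+\eta i_0+\eta rt)$ into a pure geometric sequence: the affine factor stays affine in the new parameter unless $\eta rs\equiv0$. The paper keeps the two-dimensional structure and instead demands that the two auxiliary polynomials produced by pigeonhole, $P_1$ of degree $\le l$ and $P_2$ a polynomial in $x^r$ with $r\in[l^2,2l^2]$ prime, have $a$ as a \emph{double} root mod $q$ (Lemma \ref{lm:recurrence}), so that the integer representatives of $\xi_1a^j+\xi_2ja^{j-1}$ satisfy both recurrences; Lemma \ref{lm:Konyagin} then puts $\gcd(P_1,P_2)$ in the recurrence set, Lemma \ref{lm:recurrence} makes $a$ a root of it, the degree is controlled by $P_1$, and the Mahler measure by $P_2$ through $M(\gcd)^r\le M(P_2)$ (Lemma \ref{lm:Mahler}). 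The cost of the derivative condition is a factor $2$ in the Siegel-type pigeonhole (coefficients of size $q^{2/(l+1)}$ instead of $q^{1/(l+1)}$), not ``one extra degree'' of interpolation. In short: the reduction steps you do carry out are fine and match the paper, but the central lemma is missing, mis-stated at an unattainable strength, and the proposed route to it would not go through as sketched.
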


The strategy of the proof is to estimate the Fourier coefficients
of $Y$.
For an element $x\in\F_q$, we write $|x|$ for the smallest absolute
value of an integer in the residue class of $x$.
We will show that if $a^k$ is not an exceptional residue and
$(\xi_1,\xi_2)\in\F_q^2\backslash(0,0)$, then
$|\xi_1 a^j+\xi_2 j a^{j-1}|$ cannot be small for all values of $j$
in a suitably long arithmetic progression of step size $k$.
This is the content of the next lemma.
Once we have this, we may use the assumption that $I$ contains
many disjoint arithmetic progressions to find many indices $j$ for which
$|\xi_1 a^j+\xi_2 j a^{j-1}|$ is not small.
This will allow us to estimate the Fourier transform using a product
formula that follows from the independence of $X_j$.

\begin{lemma}\label{lm:large-element}
	Let $q$ be a prime, let $a\in\F_q$, and let $(\xi_1,\xi_2)\in\F_q^2\backslash(0,0)$.
	Let $l\in\Z_{\ge 3}$, let $r>l^2$ be a prime, and let $k\in\Z_{> 0}$.
	Suppose that we have
	\[
	|\xi_1 a^{jk+j_0}+\xi_2(jk+j_0)a^{jk+j_0-1}|<\frac{q^{1-2/(l+1)}}{l+1}
	\]
	for all $j=0,\ldots, l(r+1)$.
	Then $a^k$ is an $l$-exceptional residue.
\end{lemma}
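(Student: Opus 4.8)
The plan is to interpret the quantity $\xi_1 a^{jk+j_0} + \xi_2(jk+j_0)a^{jk+j_0-1}$, as $j$ varies, as the values of a low-degree polynomial evaluated along a geometric progression, and then use a van der Monde / finite-difference argument to extract a genuine polynomial relation of small height satisfied by $a^k$. Write $b = a^k$ and $u_j = \xi_1 a^{jk+j_0} + \xi_2(jk+j_0)a^{jk+j_0-1}$. Factoring out $a^{j_0}$ (and noting $a\neq 0$ since otherwise the hypothesis already forces things to be trivial, a case one disposes of separately), we see $u_j = a^{j_0}\big(c_1 + c_2 j\big) b^{j}\cdot(\text{unit})$ for suitable $c_1,c_2\in\F_q$ depending on $\xi_1,\xi_2,j_0,k,a$, with $(c_1,c_2)\neq(0,0)$; more precisely the bracket is a degree-$\le 1$ polynomial in $j$ times $b^j$. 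The point is that $(c_1+c_2 j)b^j$ is a linear recurrence sequence of order $\le 2$, hence killed by the operator $(S-b)^2$ where $S$ is the shift $j\mapsto j+1$; concretely $b^2 u_{j} - 2b\,u_{j+1} + u_{j+2}$, after removing the $a^{j_0}$ factor, vanishes identically in $\F_q$.

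Next I would use this to build a polynomial in $b$ of controlled degree with small coefficients that vanishes at $b = a^k$. The idea: since each $u_j$ has $|u_j|$ small (less than $q^{1-2/(l+1)}/(l+1)$) for $l(r+1)+1$ consecutive values of $j$, any fixed $\Z$-linear combination $\sum_{j} \lambda_j u_j$ with not-too-large integer coefficients $\lambda_j$ also has small absolute value, and if that combination is chosen to be an algebraic identity forcing $\sum_j \lambda_j a^{j_0+jk}(\cdots)$ to equal a polynomial expression $R(b)$ in $b=a^k$, then $|R(b)|$ is small. To make $R$ genuinely a polynomial of bounded degree and bounded height, one exploits that we have \emph{many} ($l(r+1)+1$) consecutive indices $j$: using $r > l^2$ prime, split the range into blocks and apply a pigeonhole / interpolation step — this is exactly the mechanism by which the numbers $r,l$ and the length $l(r+1)$ enter. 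Concretely, from the order-$2$ annihilator above one gets that the sequence $(u_j)$ restricted to an arithmetic progression in $j$ is itself of the form $(\text{linear})\cdot b^{j}$, and iterating / telescoping yields for each pair $j<j'$ a relation $u_{j'} - (\text{explicit poly in }b, j, j')\,u_j \in (\text{small})$, allowing one to isolate a power $b^{m}$ (some $1\le m\le l+1$, coming from the progression structure and the bound $r>l^2$) with small-height polynomial coefficients, i.e. a nonzero $g\in\Z[x]$ with $\deg g\le l$, $|g(b)|$ small, and — crucially — $\log M(g)$ bounded by roughly $\log q /(l+1)^2$.

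Finally I would convert "$g(b)$ is small in $\F_q$ and $g$ has small Mahler measure" into "$g(b)=0$ in $\F_q$", which upgrades $g$ to a witness that $a^k$ is $(l,q)$-exceptional. This is the standard trick: if $g(b)\neq 0$ then its norm down to $\Q$ is a nonzero integer, hence has absolute value $\ge 1$, but by the product formula / Mahler measure bounds the reductions mod $q$ of the conjugates are all controlled, so $|N(g(b))| \le (\text{something like } q^{1 - c})\cdot M(g)^{\deg g} < q$ once the height budget $M(g)\le q^{1/(l+1)^2}$ is respected and the small-value bound $q^{1-2/(l+1)}/(l+1)$ is used across the $l(r+1)+1$ available estimates; a nonzero integer of norm $<q$ that is $\equiv 0$ is impossible, contradiction. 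Hence $g(a^k)=0$ and $a^k$ is $l$-exceptional.

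The main obstacle I expect is the middle step: bookkeeping the finite-difference/interpolation argument so that one simultaneously controls (i) the degree of the resulting polynomial $g$ (must be $\le l$), (ii) its Mahler measure (must be $\le q^{1/(l+1)^2}$), and (iii) the fact that the small-value hypothesis, available only for $j=0,\dots,l(r+1)$ along a single progression of step $k$, genuinely suffices to pin down a polynomial \emph{in $a^k$} rather than in $a$ — this is where the length $l(r+1)$ and the primality of $r>l^2$ must be used to guarantee enough independent constraints, and getting the exponents $1-2/(l+1)$, $1/(l+1)^2$, $5/(l+1)$ to line up is the delicate accounting that makes the whole section work.
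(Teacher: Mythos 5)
Your first step (the order-two annihilator $(S-b)^2$, i.e.\ that $u_j=(c_1+c_2j)b^j$ up to a unit) matches the role of the double-root condition in the paper, and your overall flavour (small values along a progression force a low-height algebraic relation, \`a la Konyagin) is right. But the middle step, which you yourself flag as the main obstacle, is precisely where the paper's actual ideas live, and your proposal does not supply them. The paper does \emph{not} build a single small polynomial $g$ with $|g(b)|$ small; it builds, by pigeonhole, \emph{two} auxiliary polynomials with integer coefficients bounded by $q^{2/(l+1)}$ having $a$ as a double root mod $q$: $P_1$ of degree $\le l$ and $P_2=\b_0+\b_1x^r+\cdots+\b_lx^{rl}$, a polynomial in $x^r$. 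The smallness hypothesis on the $b_j$ (the integer representatives of the $u_j$) is then used to upgrade the resulting linear recurrences mod $q$ to exact recurrences over $\Z$ (both sides are $<q$ in absolute value), Konyagin's lemma gives $\gcd(P_1,P_2)\in\Lambda(b_0,\dots,b_{l(r+1)})$, the degree bound $\le l$ comes from $P_1$, and the crucial Mahler-measure bound $M(\gcd)\le M(P_2)^{1/r}\le q^{1/(l+1)^2}$ comes from the fact that a low-degree divisor of a polynomial in $x^r$ has its Mahler measure raised to the $r$-th power (Lemma \ref{lm:Mahler}); this $r$-th-root saving is the only reason the exponent $1/(l+1)^2$ is attainable at all, since naive pigeonhole only gives height $q^{2/(l+1)}$. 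Your plan contains no mechanism producing this saving — you simply assert that $\log M(g)\lesssim \log q/(l+1)^2$ will come out of ``delicate accounting.''

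Your final step is also not viable as stated: $b=a^k$ is an element of $\F_q$, not an algebraic number, so there is no ``norm down to $\Q$'' of $g(b)$, no conjugates, and no product formula to invoke; that trick belongs to the archimedean/number-field setting, not here. In the paper the conclusion that $a$ is a root of $\gcd(P_1,P_2)$ is obtained quite differently: the gcd lies in $\Lambda(b_0,\dots,b_{l(r+1)})$, so the corresponding relation $\sum_j\a_j(\xi_1a^{j+j_0}+\xi_2(j+j_0)a^{j+j_0-1})=0$ holds in $\F_q$ for two consecutive shifts $j_0$, and subtracting $a$ times one from the other (using $(\xi_1,\xi_2)\neq(0,0)$, Lemma \ref{lm:recurrence}(1)) forces $\gcd(P_1,P_2)(a)=0$. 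Note also that nothing in the hypothesis makes $g(b)$ itself small: what is small are the values of the linear form in $(\xi_1,\xi_2)$, and the argument never needs $g(b)$ to be ``small but possibly nonzero''--- it gets exact vanishing from the recurrence structure. So while your opening reduction is sound, the core of the proof (two-polynomial gcd construction with the $x^r$ trick, lifting congruences to integer identities, and the two-shift vanishing argument) is missing, and the proposed closing argument would not work in $\F_q$.
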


The proof of this lemma follows an argument of Konyagin \cite{Kon}.
Writing $b_j$ for the integer with the smallest absolute value
in the residue class of $\xi_1 a^{jk+j_0}+\xi_2(jk+j_0)a^{jk+j_0-1}$,
we will show that under the assumption of the lemma, $b_j$ satisfies
two linear recurrence relations.
We will use this to show that $b_j$ also satisfies the linear recurrence
corresponding to the greatest common divisor of the polynomials associated
to the original recurrences, and hence
$a$ is a root of this polynomial.
One of the polynomials will be used to control the degree, while the other will be
used to control the Mahler measure of the greatest common divisor.

The following simple lemma will be used to construct polynomials such that
$b_j$ satisfies the corresponding linear recurrences $\mod q$, and also at the
end of the proof to conclude that $a$ is a root of the greatest common divisor.

\begin{lemma}\label{lm:recurrence}
Let $a\in\F_q$
and let $\a_0,\ldots,\a_l\in\F_q$.
Consider the equations
\begin{equation}\label{eq:recurrence}
\sum_{j=0}^{l}\a_j(\xi_1 a^{j+j_0}+\xi_2(j+j_0)a^{j+j_0-1} )=0,
\end{equation}
where $\xi_1,\xi_2\in\F_q$
for $j_0\in\Z_{\ge 0}$ with the conventions $0\cdot 0^{-1}=0$ and $0^0=1$.

Then the following hold.
\begin{enumerate}
	\item If equation \eqref{eq:recurrence} holds for $j_0=0$ and $j_0=1$ and
	$(\xi_1,\xi_2)\neq(0,0)$, then $a$ is a root of the polynomial $\a_0+\a_1 x+\ldots +\a_l x^l$.
	\item If $a$ is a double root of the polynomial $\a_0+\a_1 x+\ldots +\a_l x^l$, then
	equation \eqref{eq:recurrence} holds for all $j_0\in\Z_{\ge 0}$ and all $\xi_1,\xi_2\in \F_q$.
\end{enumerate}
\end{lemma}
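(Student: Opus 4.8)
The plan is to reinterpret the left-hand side of \eqref{eq:recurrence} as the value at $x=a$ of a single polynomial, after which both parts reduce to elementary facts about polynomials and their formal derivatives. Write $f(x)=\alpha_0+\alpha_1 x+\ldots+\alpha_l x^l$. The key observation is that, with the stated conventions $0\cdot 0^{-1}=0$ and $0^0=1$, the term $\xi_1 a^{j+j_0}+\xi_2(j+j_0)a^{j+j_0-1}$ is exactly the value at $x=a$ of the polynomial $\xi_1 x^{j+j_0}+\xi_2\,(x^{j+j_0})'$, where $'$ denotes the formal derivative; the conventions are designed precisely so that this identity survives the degenerate cases $a=0$ and $j+j_0\in\{0,1\}$, and I would spell out this small verification explicitly. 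Summing over $j$ and pulling $\xi_1$ and the derivative through the (finite) sum, \eqref{eq:recurrence} for a given $j_0$ becomes equivalent to
\[
\xi_1\, g_{j_0}(a)+\xi_2\, g_{j_0}'(a)=0,\qquad\text{where } g_{j_0}(x):=x^{j_0}f(x).
\]

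For part (1), I would apply this with $j_0=0$ and $j_0=1$. Since $g_0=f$ and $g_1'=(xf)'=f+xf'$, the two hypotheses read $\xi_1 f(a)+\xi_2 f'(a)=0$ and $\xi_1 a f(a)+\xi_2\big(f(a)+af'(a)\big)=0$. Subtracting $a$ times the first from the second cancels the $f'(a)$ terms and leaves $\xi_2 f(a)=0$. If $\xi_2\neq 0$ this gives $f(a)=0$ at once; if $\xi_2=0$, then $\xi_1\neq 0$ by hypothesis and the first relation is $\xi_1 f(a)=0$, again forcing $f(a)=0$. In either case $a$ is a root of $\alpha_0+\alpha_1 x+\ldots+\alpha_l x^l$.

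For part (2), if $a$ is a double root of $f$ then $(x-a)^2\mid f(x)$, hence $(x-a)^2\mid g_{j_0}(x)=x^{j_0}f(x)$ for every $j_0\in\Z_{\ge 0}$, and any polynomial divisible by $(x-a)^2$ vanishes together with its formal derivative at $x=a$. Therefore $g_{j_0}(a)=g_{j_0}'(a)=0$, and the displayed equation — equivalently \eqref{eq:recurrence} — holds for all $j_0\in\Z_{\ge 0}$ and all $\xi_1,\xi_2\in\F_q$.

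There is no substantive obstacle here; the only point demanding care is matching the conventions $0\cdot 0^{-1}=0$ and $0^0=1$ against the polynomial interpretation when $a=0$, which is why I would carry out that bookkeeping in detail rather than leave it implicit.
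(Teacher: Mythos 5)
Your proof is correct and is essentially the same as the paper's: part (1) is the same subtraction of $a$ times the $j_0=0$ equation from the $j_0=1$ equation, and part (2) is the same observation that $a$ is a root of $x^{j_0}f(x)$ and of its formal derivative, followed by taking the $\xi_1,\xi_2$ linear combination. Packaging the identity as $\xi_1 g_{j_0}(a)+\xi_2 g_{j_0}'(a)=0$ with $g_{j_0}(x)=x^{j_0}f(x)$ is just a cleaner bookkeeping of the same computation, including the degenerate conventions at $a=0$.
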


\begin{proof}
We begin with the first claim.
If $\xi_2\neq 0$, we subtract $a$ times equation \eqref{eq:recurrence} for
$j_0=0$ from the same equation for $j_0=1$.
We get
\[
\sum_{j=0}^l\a_j(\xi_1(a^{j+1}-a^{j+1})+\xi_2((j+1)a^j-ja^j))=0,
\]
which reduces to
\[
\sum_{j=0}^l\a_j\xi_2a^j=0,
\]
and proves the claim upon dividing the equation by $\xi_2$.

If $\xi_2=0$ and then necessarily $\xi_1\neq 0$, we get the claim if we divide
\eqref{eq:recurrence} for $j_0=0$ by $\xi_1$.
This proves the first claim. 

Next, we turn to the second claim.
Using that $a$ is a root of the polynomial $x^{j_0}(\a_0+\ldots+\a_lx^l)$ and of its derivative,
we get the equations
\begin{align*}
\sum_{j=j_0}^l\a_j a^{j+j_0}&=0,\\
\sum_{j=j_0}^l(j+j_0)\a_j a^{j+j_0-1}&=0.
\end{align*}
Taking a linear combination of these equations with coefficients
$\xi_1, \xi_2$, we get \eqref{eq:recurrence}.
This proves the second claim.
\end{proof}

Let $X=(x_0,\ldots,x_N)$ be a sequence of integers.
We write $\Lambda(X)$ for the set of polynomials $P(x)=\a_0+\ldots+\a_d x^d$
of degree $d$ for some $d\le N$ such that
\[
\sum_{j=0}^d \a_j x_{j+j_0}=0
\]
for all $j_0=0,\ldots, N-d$.

The next lemma, which we quote from \cite{Kon}*{Lemma 5}, will be used to show that
the sequence $b_j$ satisfies the linear recurrence relation corresponding
to the greatest common divisor of the two polynomials that we will construct.

\begin{lemma}\label{lm:Konyagin}
Let $X=(x_0,\ldots,x_N)$ be an integer sequence.
Suppose $P_1,P_2\in\Lambda(X)$ and $\deg(P_1)+\deg(P_2)\le N$
then we have $\gcd(P_1,P_2)\in\Lambda(X)$.
\end{lemma}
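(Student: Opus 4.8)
The plan is to reformulate membership in $\Lambda(X)$ as a rational-approximation condition in the power series ring $\Q[[t]]$, where the greatest common divisor becomes transparent. Set $F(t)=\sum_{j=0}^{N}x_j t^{j}$, and for a polynomial $P=\alpha_0+\ldots+\alpha_d x^{d}$ of degree $d$ let $P^{*}(t)=t^{d}P(1/t)$ be its reversal; then $P^{*}(0)=\alpha_d$ is the leading coefficient of $P$, so $P^{*}$ is a unit in $\Q[[t]]$, and reversal is multiplicative. A one-line coefficient computation gives $[t^{s}](P^{*}F)=\sum_{j=0}^{d}\alpha_j x_{s-d+j}$ for $d\le s\le N$, so that $P\in\Lambda(X)$ is equivalent to $P^{*}F\equiv G\pmod{t^{N+1}}$ for some polynomial $G$ with $\deg G\le d-1$, i.e.\ to $F\equiv G/P^{*}\pmod{t^{N+1}}$ in $\Q[[t]]$. (Since $\Lambda(X)$ is invariant under rescaling by a nonzero constant, we are free to take the gcd over $\Q$.)

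Applying this to $P_1$ and $P_2$ produces polynomials $G_1,G_2$ with $\deg G_i\le\deg P_i-1$ and $F\equiv G_i/P_i^{*}\pmod{t^{N+1}}$. Equating the two and clearing the invertible denominators, $G_1P_2^{*}\equiv G_2P_1^{*}\pmod{t^{N+1}}$; and this is exactly where the hypothesis enters, since both sides are polynomials of degree at most $\deg P_1+\deg P_2-1\le N-1$, so the congruence is an honest identity $G_1P_2^{*}=G_2P_1^{*}$. Writing $g=\gcd(P_1,P_2)$ and $h_i=P_i/g$, multiplicativity of reversal gives $P_i^{*}=g^{*}h_i^{*}$, and cancelling $g^{*}$ (nonzero in the domain $\Q[t]$) leaves $G_1h_2^{*}=G_2h_1^{*}$.

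Next I would note that $\gcd(h_1^{*},h_2^{*})=1$: a nonconstant common factor of $h_1^{*}$ and $h_2^{*}$ is coprime to $t$ (because $h_i^{*}(0)\ne0$), hence its reversal is again nonconstant and divides both $h_1$ and $h_2$, contradicting $\gcd(h_1,h_2)=1$. Therefore $h_1^{*}\mid G_1$, say $G_1=h_1^{*}Q$, and then $g^{*}F\equiv g^{*}G_1/P_1^{*}=G_1/h_1^{*}=Q\pmod{t^{N+1}}$. It remains to bound $\deg Q$: if $x\nmid h_1$ then $\deg h_1^{*}=\deg h_1=\deg P_1-\deg g$, so $\deg Q=\deg G_1-\deg h_1^{*}\le\deg g-1$; and since $\gcd(h_1,h_2)=1$ allows $x$ to divide at most one of $h_1,h_2$, we simply run this with whichever of $h_1,h_2$ is coprime to $x$. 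Thus $g^{*}F$ agrees modulo $t^{N+1}$ with a polynomial of degree at most $\deg g-1$, which by the criterion of the first paragraph is precisely the statement $g\in\Lambda(X)$.

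I expect the main obstacle to be this last degree bound. The assertion $g\in\Lambda(X)$ is not merely that $X$ obeys the recurrence attached to $g$ on a few windows, but that it does so on all $N-\deg g+1$ of them, and securing this is exactly what the hypothesis $\deg P_1+\deg P_2\le N$ provides, through the passage from a congruence modulo $t^{N+1}$ to an exact polynomial identity. The same feature explains why a naive induction along the Euclidean algorithm does not suffice: a remainder may obey the relevant recurrence on short windows without belonging to $\Lambda(X)$.
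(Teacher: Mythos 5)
Your proof is correct. One point of comparison worth making explicit: the paper does not prove this lemma at all --- it is quoted verbatim as Lemma 5 of Konyagin's paper \cite{Kon} and used as a black box --- so your argument supplies a self-contained proof where the paper only gives a citation. Your route, recasting $P\in\Lambda(X)$ as the statement that the generating series $F(t)=\sum_j x_j t^j$ agrees modulo $t^{N+1}$ with a rational function $G/P^{*}$ whose numerator has degree $<\deg P$, is the natural generating-function formulation of linear recurrences, and all the steps check out: the coefficient identity $[t^s](P^{*}F)=\sum_j \a_j x_{s-d+j}$ for $d\le s\le N$ is exactly the defining condition of $\Lambda(X)$; the hypothesis $\deg P_1+\deg P_2\le N$ is used precisely where it must be, to upgrade $G_1P_2^{*}\equiv G_2P_1^{*}\pmod{t^{N+1}}$ to a polynomial identity; the coprimality of $h_1^{*}$ and $h_2^{*}$ is correctly reduced to $\gcd(h_1,h_2)=1$ via reversal (using that $h_i^{*}(0)\neq 0$, so a common factor has nonzero constant term and its reversal divides $h_i$); and the degree bookkeeping, including the case $x\mid h_1$ handled by switching to $h_2$, gives $\deg Q\le\deg g-1$, which is the membership criterion for $g=\gcd(P_1,P_2)$. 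Your remark that scaling invariance of $\Lambda(X)$ lets you work with the gcd over $\Q$ also takes care of the $\Z$-versus-$\Q$ normalization of the gcd, which is what the paper's application (where the gcd is an integer polynomial) needs. The only thing I would add is the one-line justification, implicit in your write-up, that reversal is multiplicative with respect to actual degrees and that $(\td h)^{**}=\td h$ once the factor $x^{v}$ dividing $h_i$ is stripped off; with that spelled out the argument is complete.
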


One of the polynomials that we will construct will be a polynomial in $x^r$
for a suitable number $r$.
The next lemma gives an estimate for the Mahler measures of low degree
divisors of such a polynomial.
This result is standard, but we include the proof for the sake of completeness.

\begin{lemma}\label{lm:Mahler}
Let $P_1,P_2\in\Z[x]$ be non-zero polynomials and let $r>\deg(P_1)^2$ be a prime
and suppose $P_1(x)|P_2(x^r)$.
Then $M(P_1)^r\le M(P_2)$
\end{lemma}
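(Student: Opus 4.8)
The plan is to produce a polynomial $G\in\Z[x]$ with $\deg G=\deg P_1$, $M(G)=M(P_1)^r$, and $G(x^r)\mid P_2(x^r)$; the conclusion then follows from the standard facts that $M$ is multiplicative on $\C[x]$, that $M(f(x^r))=M(f)$ and $M(f(\zeta x))=M(f)$ for $|\zeta|=1$ (all immediate from the product formula for $M$, using $\max(1,|\rho|^{1/r})^r=\max(1,|\rho|)$), and that $M(C)\ge 1$ for every nonzero $C\in\Z[x]$: writing $P_2(x^r)=G(x^r)C(x)$ with $C\in\Z[x]$ we get $M(P_2)=M(P_2(x^r))=M(G(x^r))M(C)\ge M(G(x^r))=M(G)=M(P_1)^r$. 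Before starting I would pull out the largest power of $x$ from both $P_1$ and $P_2$: this changes neither Mahler measure and preserves the divisibility, with $\deg P_1$ only decreasing, so we may assume $P_1(0)\ne 0$. I will also assume $P_1$ is primitive; this is a harmless normalization for our application and is in fact necessary for the stated bound (otherwise one must remove the content of $P_1$).

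Next I would set $G(x^r)=\prod_{\zeta^r=1}P_1(\zeta x)$. Replacing $x$ by $\zeta_0 x$ for an $r$-th root of unity $\zeta_0$ only permutes the factors, so this product is genuinely a polynomial in $x^r$, of degree $r\deg P_1$; hence $\deg G=\deg P_1$, and $G\in\Z[x]$ since its coefficients are symmetric functions of the $r$-th roots of unity with integer coefficients. As $M(P_1(\zeta x))=M(P_1)$ for every $r$-th root of unity $\zeta$, multiplicativity of $M$ gives $M(G)=M(G(x^r))=\prod_{\zeta^r=1}M(P_1(\zeta x))=M(P_1)^r$.

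The key point, and the only place the hypothesis on $r$ is used, is that the factors $P_1(\zeta x)$ are pairwise coprime. Suppose $P_1(\zeta x)$ and $P_1(\zeta' x)$ had a common root $x_0$ with $\zeta\ne\zeta'$. Then $\alpha:=\zeta x_0$ and $\beta:=\zeta' x_0$ are roots of $P_1$, nonzero since $P_1(0)\ne 0$, and $\alpha/\beta=\zeta/\zeta'$ is a nontrivial $r$-th root of unity, hence — as $r$ is prime — a primitive one. Then $\Q(\zeta_r)\subseteq\Q(\alpha,\beta)$, so $r-1=[\Q(\zeta_r):\Q]\le[\Q(\alpha,\beta):\Q]$. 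But $[\Q(\alpha,\beta):\Q]<(\deg P_1)^2$: if $\alpha,\beta$ are roots of the same irreducible factor $g$ of $P_1$ then $[\Q(\alpha,\beta):\Q]\le \deg(g)(\deg(g)-1)$, because the minimal polynomial of $\beta\ne\alpha$ over $\Q(\alpha)$ divides $g(x)/(x-\alpha)$; if they lie in distinct irreducible factors, $[\Q(\alpha,\beta):\Q]$ is at most the product of their degrees, whose sum is at most $\deg P_1$. Since $r>(\deg P_1)^2$ forces $r-1\ge(\deg P_1)^2$, this is a contradiction, proving coprimality. Now $P_1(x)\mid P_2(x^r)$ together with $P_2((\zeta x)^r)=P_2(x^r)$ gives $P_1(\zeta x)\mid P_2(x^r)$ over $\C$ for each $\zeta$; by coprimality their product $G(x^r)$ divides $P_2(x^r)$ over $\C$, hence over $\Q$, hence over $\Z$ since $G(x^r)$ is primitive (no prime $p$ divides all its coefficients: reduce $\prod_{\zeta^r=1}P_1(\zeta x)$ modulo a prime of $\Z[\zeta_r]$ above $p$ and use that a reduction of the primitive $P_1$ cannot vanish). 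This produces the factorization $P_2(x^r)=G(x^r)C(x)$ with $C\in\Z[x]$ needed above.

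I expect the coprimality step to be the main obstacle: one must extract from the hypothesis the sharp inequality $[\Q(\alpha,\beta):\Q]<(\deg P_1)^2$ for a ratio of two roots of $P_1$ and set it against $[\Q(\zeta_r):\Q]=r-1$, where the primality of $r$ is used to guarantee that a nontrivial $r$-th root of unity is primitive. The rest is routine manipulation of Mahler measures, with the only mild nuisance being the handling of contents, dealt with by assuming $P_1$ primitive.
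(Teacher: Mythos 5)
Your proof is correct, and it reaches the conclusion by a somewhat different route than the paper. Both arguments hinge on the same key fact, which is where $r>\deg(P_1)^2$ and the primality of $r$ enter: if two roots $\alpha,\beta$ of $P_1$ had ratio a nontrivial $r$-th root of unity, then $\Q(\zeta_r)\subseteq\Q(\alpha,\beta)$ would force $r-1\le[\Q(\alpha,\beta):\Q]<\deg(P_1)^2$, a contradiction; the paper uses this to see that the numbers $\alpha\zeta^j$ are pairwise distinct, while you use it to see that the rotated polynomials $P_1(\zeta x)$ are pairwise coprime (your two-case degree bound, via the minimal polynomial of $\beta$ over $\Q(\alpha)$ dividing $g(x)/(x-\alpha)$, is a careful version of the paper's one-line assertion). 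Where you diverge is in how the Mahler measures are compared. The paper stays with the root multisets: each $\alpha\zeta^j$ is a root of $P_2(x^r)$ with at least the multiplicity of $\alpha$ in $P_1$, and the comparison is made through the product over all places of $\Q$ of $\max(1,|\alpha|_v)$, so no integral divisor is ever constructed and no Gauss-type argument is needed. You instead build the explicit polynomial $G(x^r)=\prod_{\zeta^r=1}P_1(\zeta x)\in\Z[x]$ (concretely $\pm a^r\prod_i(x^r-\alpha_i^r)$), show by coprimality and primitivity that it divides $P_2(x^r)$ in $\Z[x]$, and finish with multiplicativity of $M$ and $M(C)\ge1$ for nonzero $C\in\Z[x]$; this keeps the whole argument at the archimedean place at the cost of the Gauss's lemma step. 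Your explicit primitivity normalization is legitimate and in fact unavoidable for the statement as literally written (e.g.\ $P_1=P_2=2x-2$, $r=2$ violates the stated inequality); the paper's place-by-place identity likewise implicitly concerns the primitive part, and in the application the lemma is invoked for a gcd, which may be taken primitive, so nothing is lost.
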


\begin{proof}
Let $\zeta$ be a primitive $r$'th root of unity.
For each root $\a$ of $P_1$, $\a\zeta^j$ is a root of $P_2(x^r)$ for all
$j=0,\ldots,r-1$ with the same multiplicity as $\a$ is a root of $P_1$.
Moreover, these are distinct numbers because $\deg(\a/\a')< \deg(P_1)^2$
for any two roots $\a,\a'$ of $P_1$ and $\deg(\zeta^j)\ge r-1$ for
$j=1,\ldots,r-1$.

Therefore, denoting the places of $\Q$ by $M(\Q)$, we have
\begin{align*}
M(P_2)&=\prod_{v\in M(\Q)}\prod_{\a\in\overline{\Q_v}:P_2(\a)=0} \max(1,|\a|_v)\\
&\ge \prod_{v\in M(\Q)}\prod_{\a\in\overline{\Q_v}:P_1(\a)=0}
\prod_{j=0}^{r-1} \max(1,|\a\zeta^j|_v)\\
&=M(P_1)^r.
\end{align*}
\end{proof}

\begin{proof}[Proof of Lemma \ref{lm:large-element}]
If $a=0$, the conclusion holds, so we assume that $a\neq 0$.
We observe that
\begin{align*}
\xi_1 a^{jk+j_0}+\xi_2(jk+j_0)a^{jk+j_0-1}
&= \xi_1a^{j_0}(a^k)^j+\xi_2(jk+j_0)a^{j_0-1}(a^k)^{j}\\
&=(\xi_1 a^{j_0}+\xi_2 j_0 a^{j_0-1})(a^k)^j
+\xi_2k a^{k+j_0-1}j(a^k)^{j-1}.
\end{align*}
If we replace $a$ by $a^k$, $\xi_1$ by $\xi_1 a^{j_0}+\xi_2 j_0 a^{j_0-1}$
and $\xi_2$ by $\xi_2k a^{k+j_0-1}$,
this reduces the lemma to the case $j_0=0$ and $k=1$, so we will only consider that case.
It is easy to check that the new values of $\xi_1$ and $\xi_2$ are
not both $0$ if the original values were not.

Let $b_j$ be the smallest integer in absolute value  in the residue
class of $\xi_1a^j+\xi_2ja^{j-1}$ for $j\in\Z_{\ge 0}$.
By assumption, we have
\[|b_j|<\frac{q^{1-2/(l+1)}}{l+1}\]
for all $j=0,\ldots, l(r+1)$.

Let $\a_0,\ldots,\a_l, \b_0,\ldots,\b_l\in\Z$
with $|\a_j|,|\b_j|<q^{2/(l+1)}$ for all $j$ be
such that
$a$ is a double root of both polynomials
$P_1(x)=\a_0+\ldots+\a_l x^l$ and $P_2(x)=\b_0+\b_1 x^r+\ldots+\b_l x^{rl}$,
and both $P_1$ and $P_2$ are non-zero.
Such coefficients exist by the pigeon-hole principle.
This is a finite field version of Siegel's lemma.

We assume, as we may, that $\a_l$ and $\b_l$ are both non-zero, for otherwise we
could multiply $P_1$ or $P_2$ by suitable powers of $x$.

Now by Lemma \ref{lm:recurrence}, it follows that
\begin{equation}\label{eq:cong1}
\sum_{j=0}^{l}\a_j b_{j+j_0}\equiv 0\mod q
\end{equation}
for all $j_0=0,\ldots, lr$
and
\begin{equation}\label{eq:cong2}
\sum_{j=0}^{l}\b_j b_{jr+j_0}\equiv 0\mod q
\end{equation}
for all $j_0=0,\ldots,l$.
For the second claim, we use the lemma for a coefficient sequence of length
$lr+1$ with $r-1$ zeroes inserted between consecutive $\beta_j$.

By the triangle inequality and the upper bounds on the $\a_j$, $\b_j$ and $b_j$, we
have that the left hand sides of \eqref{eq:cong1} and \eqref{eq:cong2}
are less than
\[
(l+1)\frac{q^{1-2/(l+1)}}{l+1}q^{2/(l+1)}=q
\]
in absolute value, hence they are $0$.
Therefore, $P_1,P_2\in \Lambda(b_0,\ldots,b_{l(r+1)})$.

By Lemma~\ref{lm:Konyagin}, we have $\gcd(P_1,P_2)\in\Lambda(b_0,\ldots, b_{l(r+1)})$.
By Lemma~\ref{lm:recurrence}, $a$ is then a root of $\gcd(P_1,P_2)$.
We clearly have $\deg(\gcd(P_1,P_2))\le\deg(P_1)\le l$ and
\[
M(\gcd(P_1,P_2))\le M(P_2)^{1/r}\le ((l+1)q^{2/(l+1)})^{1/r}\le q^{1/(l+1)^2}
\]
by Lemma \ref{lm:Mahler} and $l\ge 3$, provided $q$ is sufficiently large in terms of $l$.
Therefore, $P: = \gcd(P_1,P_2)$ is an $(l,q)$-exceptional polynomial
and $a$ is an $l$-exceptional residue.
\end{proof}

\begin{proof}[Proof of Proposition \ref{pr:unexceptional}]
We consider the Fourier transform of the distribution of $Y$, which we
compute as
\begin{align*}
\wh Y(\xi_1,\xi_2)&=\E[\exp(2\pi i(\xi_1,\xi_2)\cdot Y/q)]\\
&=\prod_{j\in I}\E[\exp(2\pi iX_j(\xi_1 a^j+\xi_2 j a^{j-1})/q)]\\
&=\prod_{j\in I}\cos(2\pi(\xi_1 a^j+\xi_2 j a^{j-1})/q).
\end{align*}

We note the elementary inequality
\[
\cos(2\pi b/q)\le \exp(-(\pi^2/2)(|2b|/q)^2).
\]
Suppose $(\xi_1,\xi_2)\neq(0,0)$.
We apply Lemma \ref{lm:large-element} with a prime $r\in[l^2, 2l^2]$ and conclude that
every arithmetic progression in $\Z_{\ge 0}$
of length
\[
l(r+1)+1\le l(2l^2+1)\le 3l^3
\]
with common difference at most $K$
contains an element $j$ such that
\[
|2\xi_1 a^j+2\xi_2 j a^{j-1}|\ge\frac{q^{1-2/(l+1)}}{l+1},
\]
and hence
\[
\cos(2\pi(\xi_1 a^j+\xi_2 j a^{j-1})/q)\le \exp(-(\pi^2/2)q^{-4/(l+1)}/(l+1)^2).
\]

Since there are more than
\[
q^{5/(l+1)}\ge 10\log q\cdot (2/\pi^2) q^{4/(l+1)}(l+1)^2
\] 
disjoint arithmetic progressions in $I$, we have
\[
|\wh Y(\xi_1,\xi_2)|\le q^{-10}
\]
provided $(\xi_1,\xi_2)\neq(0,0)$.

Using the Fourier inversion formula
\[
\P[Y=x]=\frac{1}{q^2}\sum_{\xi_1,\xi_2\in\F_q}
\exp(-2\pi i(\xi_1,\xi_2)\cdot x)\wh Y(\xi_1,\xi_2)
\]
and $\wh Y(0,0)=1$, we conclude
\[
\Big|\P[Y=x]-\frac{1}{q^2}\Big|\le q^{-10}
\]
as required.
\end{proof}

We finish this section by recording a classical Littlewood-Offord type
bound that shows that the random walk spreads out substantially even when
$a$ is an exceptional residue.
In the case $a=1$, the result is essentially sharp.

\begin{proposition}\label{pr:exceptional}
Let $q$ be an odd prime, and let $a\in\F_q^\times$.
Let $I$ be a set of positive integers with $|I|\le q$.
Let $X_i$ be independent uniform $\pm1$ valued random variables for $i\in I$, and let
\[
Y=\sum_{i\in I} X_i a^i
\]
be a random element of $\F_q$.
Then
\[
\P[Y=x]\le 129 |I|^{-1/2}.
\]
for all $x\in \F_q$.
\end{proposition}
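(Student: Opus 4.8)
The plan is to prove Proposition~\ref{pr:exceptional} by a two-step reduction: first pass from the finite field $\F_q$ to the integers, and then invoke the classical Erd\H{o}s--Littlewood--Offord theorem on anticoncentration of signed sums. Write $c_i\in\{1,\ldots,q-1\}$ for the representative of $a^i$ in $\F_q$. Then $Y=x$ in $\F_q$ means $\sum_{i\in I}X_i c_i\equiv x\pmod q$, so if we let $S=\sum_{i\in I}X_i c_i\in\Z$, we need to bound $\P[S\equiv x\pmod q]$. Since $|c_i|\le q-1$ and $|I|\le q$, the integer $S$ lies in an interval of length at most $2(q-1)|I|\le 2q^2$, hence $S\equiv x\pmod q$ forces $S$ to take one of at most $2|I|+1\le 3q$ specific integer values. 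The first step is therefore to show that for each fixed integer value $v$, $\P[S=v]$ is small, and then sum over the $O(q)$ possible values; but naively this only gives a bound of the shape $q\cdot(\text{point bound for }S)$, which is too weak. So instead I would use the sharper route: for each fixed residue the set of integer targets $v$ with $v\equiv x\pmod q$ in the relevant range are spaced $q$ apart, and the Erd\H{o}s bound controls the sum of $\P[S=v]$ over \emph{any} such arithmetic progression of spacing $q$ via the Erd\H{o}s--Littlewood--Offord theorem in its strong (Sperner-type) form.

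Concretely, the key input is the Erd\H{o}s--Littlewood--Offord theorem: if $c_1,\ldots,c_n$ are nonzero reals and $X_1,\ldots,X_n$ are independent uniform $\pm1$, then for any open interval $J$ of length $\le \max_i|c_i|$ (or more generally, for a target set with suitable separation) one has $\P[\sum X_i c_i\in J]\le \binom{n}{\lfloor n/2\rfloor}2^{-n}$, and there is a generalization (due to Erd\H{o}s, refined by Kleitman, S\'ark\"ozy--Szemer\'edi, etc.) bounding $\P[\sum X_i c_i\in A]$ when $A$ is contained in an arithmetic progression: if $A$ is a union of $k$ residue-translates appropriately spaced, the bound becomes roughly $k\cdot\binom{n}{\lfloor n/2\rfloor}2^{-n}$. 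Since $\binom{n}{\lfloor n/2\rfloor}2^{-n}\le \sqrt{2/(\pi n)}\le 1/\sqrt n$ for $n\ge1$, and here $n=|I|$, a bound of the form $C|I|^{-1/2}$ with an explicit constant like $129$ will drop out once the combinatorial counting of how many integer targets are hit is done carefully. I would first handle the case $a=1$ (all $c_i=1$, $S=\sum X_i$ is a shifted binomial, and $\P[S\equiv x\pmod q]$ is literally $\P$ of hitting an arithmetic progression of common difference $q$ by a $\pm1$ walk of length $|I|$, which is controlled by summing the relevant binomial coefficients and using the central binomial estimate) to calibrate the constant, then treat general $a$ by the reduction above, noting that all $c_i$ are nonzero since $a\in\F_q^\times$.

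In more detail, the steps in order would be: (1) Reduce to bounding $\sum_{v\equiv x\,(q)}\P[S=v]$ where $S=\sum_{i\in I}X_i c_i\in\Z$ and the sum is over integers $v$ in the range $[-(q-1)|I|,\,(q-1)|I|]$, of which there are at most $2|I|+1$. (2) Apply the quantitative Erd\H{o}s--Littlewood--Offord / Stanley-type bound for the probability that a signed sum of nonzero integers lands in a prescribed set that is a $q$-separated arithmetic progression; the right statement is that this probability is at most (number of progression terms meeting the range) times $\max_{v}\P[S=v]$ is too lossy, so instead use the inequality $\P[S\in A]\le \binom{|I|}{\lfloor|I|/2\rfloor}2^{-|I|}\cdot \lceil |A'|/1\rceil$ with $A'$ the reduced configuration — i.e.\ I would cite the sharpened form where the level sets of $\P[S=\cdot]$ are unimodal after grouping, so that summing over a $q$-spaced set picks up each "layer" at most once. (3) Bound $\binom{|I|}{\lfloor|I|/2\rfloor}2^{-|I|}\le \min(1,\sqrt{2/\pi}\cdot|I|^{-1/2})$, and absorb the small-$|I|$ cases and the counting constants into the final constant $129$. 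The main obstacle is step (2): the plain Littlewood--Offord bound is for a single short interval, whereas our target set is a long arithmetic progression of common difference $q$, and one must invoke the right generalization (the "spread out in many buckets" version) and verify its hypotheses here — namely that consecutive elements of the target progression are separated by $q$, which exceeds the step sizes $|c_i|$ in a way that lets the buckets be treated independently. Getting the explicit constant $129$ right is then a matter of bookkeeping, with the $a=1$ case dictating that it cannot be much smaller than $\sqrt{2/\pi}$ times the number of buckets, i.e.\ of order a small absolute constant.
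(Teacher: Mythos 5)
Your step (1) (lifting to $\Z$ and reducing to $\P[S\in x+q\Z]$ with $S=\sum_{i\in I}X_ic_i$, $c_i\in[1,q-1]$) is fine, but step (2) is a genuine gap, and it is exactly where the whole difficulty of the proposition sits. The target set is an arithmetic progression of spacing $q$ containing up to about $2|I|$ points of the range of $S$, and no standard form of the Erd\H{o}s--Littlewood--Offord theorem gives $\P[S\in A]\le C|I|^{-1/2}$ for such a set: the classical Erd\H{o}s/Sperner bound and Kleitman's refinement control a single interval (or give a bound proportional to the number of intervals, i.e.\ $\approx |I|^{1/2}$ here, which you already note is useless). The ``sharpened form'' you invoke --- that the level sets of $v\mapsto\P[S=v]$ are unimodal after grouping so each ``layer'' is hit at most once by a $q$-spaced set --- is not a theorem and is false as stated: for general coefficients the distribution of $S$ is not unimodal (e.g.\ coefficients at two very different scales produce a multi-bump distribution whose peaks a $q$-spaced set can sample repeatedly), and the Sperner-type antichain structure underlying Erd\H{o}s' proof breaks mod $q$ (with $c_1=1$, $c_2=q-1$, flipping both signs changes $S$ by $2q\equiv 0\pmod q$, so the sign patterns hitting a fixed residue class do not form an antichain). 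In effect, the statement you want in step (2) is equivalent to the proposition itself, so citing an ELO-type result there is circular unless you actually prove such a mod-$q$ anticoncentration bound; your $a=1$ calibration works only because in that case the range of $S$ meets each residue class in $O(1)$ points, which is precisely what fails for general $a$.

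For comparison, the paper does not argue combinatorially over $\Z$ at all. It first replaces the $\pm1$ variables by lazy variables (value $0$ with probability $3/4$, $\pm1$ with probability $1/8$ each), using the comparison inequality in \cite{TV-book}*{Corollary 7.12}, which says the concentration probability only increases under this replacement; it then applies a Fourier-analytic anticoncentration estimate in $\F_q$ for lazy walks with nonzero coefficients (\cite{CJMS}*{Lemma 12}), giving $64(|I|/4)^{-1/2}+q^{-1}\le 129|I|^{-1/2}$. The laziness is what makes the Fourier argument work (the characteristic function of a lazy step stays bounded away from $-1$, avoiding the $\cos\approx-1$ obstruction for genuine $\pm1$ steps). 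If you want to salvage your approach, you would need to supply an argument of this Fourier/Ess\'een type (or some other genuine proof of mod-$q$ anticoncentration); the lift-to-$\Z$ reduction by itself does not get you there.
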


\begin{proof}
	Let $\wt X_i$ for $i\in I$ be a sequence of independent random variables
	taking the value $0$ with probability $3/4$ and each of $\pm 1$ with
	probability $1/8$.
	
	By \cite{CJMS}*{Lemma 12} applied with $\mu=1/4$, we have
	\[
	\P\Big[\sum_{i\in I} \wt X_i a^i =x\Big]\le 64(|I|/4)^{-1/2}+q^{-1}
	\le 129 |I|^{-1/2}
	\]
	for all $x\in \F_q$.
	
	By \cite{TV-book}*{Corollary 7.12} applied with $\mu'=1$ and $\mu=1/4$, we have
	\[
	\P\Big[\sum_{i\in I} X_i a^i =x\Big]\le \P\Big[\sum_{i\in I} \wt X_i a^i =0\Big]
	\le 129 |I|^{-1/2}
	\]
	for all $x\in \F_q$.
\end{proof}

\section{Expected number of roots of a random polynomial in a finite field}
\label{sc:number-of-roots}

Recall that $X_j$ for $j\in\Z_{\ge 0}$ is a random multiplicative sequence.
To simplify notation, we introduce the random polynomial
\[R(x): = 
P_{d+1}(x)/x=X_1+X_2x+\ldots+X_{d+1} x^d.
\]
Recall also that $B_Q(q)$ is the number of distinct roots of a polynomial $Q$
in the finite field $\F_q$.

The purpose of this section is to deduce the following estimates for the expected number of roots and double roots of $R$.

\begin{proposition}\label{pr:number-roots-random-poly}
Fix $l\in\Z_{>0}$ and $\e>0$.
Let $q$ be an odd prime and $d\in\Z_{>0}$ that are both sufficiently
large in terms of $l$ and such that $l\ge \e^{-1}\log q/\log d$ and $q>d$.
Then
\begin{align*}
|\E[B_R(q)]-1|&<d^{-1/2+\e},\\
\E[|\{a\in\F_q: \text{$a$ is a double root of $R$}\}|]&<d^{-1/2+\e}.
\end{align*}
\end{proposition}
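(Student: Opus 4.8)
The plan is to estimate $\E[B_R(q)] = \sum_{a\in\F_q} \P[R(a) = 0]$ and the analogous sum $\sum_a \P[R(a)=R'(a)=0]$ by splitting the residues $a\in\F_q$ into two classes: those $a$ such that $a^k$ is $l$-exceptional for some small $k$ (the exceptional residues) and those for which no such $k$ exists. For the non-exceptional residues I would invoke Proposition \ref{pr:unexceptional}. The subtlety is that the coefficients $X_j$ are not independent — they come from the multiplicative sequence. To apply Proposition \ref{pr:unexceptional} I would condition on the values $X_p$ for all primes $p < (d+1)/2$. Once these are fixed, the coefficients $X_n$ with $n$ having a prime factor $\ge (d+1)/2$ are, for $n\le d+1$, of the form $\pm X_p$ for a single prime $p\in[(d+1)/2, d+1]$ with the sign determined by the conditioning, and these are mutually independent and independent of everything else. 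Writing $Y = \sum_{p} X_p(\pm a^{p-1}, \pm(p-1)a^{p-2})$ with $p$ running over primes in $[(d+1)/2,d+1]$, the vector $(R(a),R'(a))$ is $Y$ plus a constant (given the conditioning). So I need the index set $I = \{p-1 : p \text{ prime}, (d+1)/2 \le p \le d+1\}$ to contain $q^{5/(l+1)}$ pairwise disjoint arithmetic progressions of length $3l^3$ with bounded common difference; since $l\ge\e^{-1}\log q/\log d$ forces $q^{5/(l+1)}\le d^{5\e/(\ldots)}$, this is a sub-polynomial-in-$d$ count, and the existence of such progressions in the primes of a dyadic interval follows from the Green--Tao / Tao--Ziegler / Shao results cited in the introduction. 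Conditional on the small primes, Proposition \ref{pr:unexceptional} then gives $|\P[(R(a),R'(a))=x] - q^{-2}| < q^{-10}$ for every $x$, and averaging over the conditioning preserves this.

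For the exceptional residues I would argue as follows. Since an $l$-exceptional polynomial has degree $\le l$ and Mahler measure $\le q^{1/(l+1)^2}$, the number of such polynomials is at most roughly $(q^{1/(l+1)^2})^{O(l)}\cdot C^l = q^{O(1/(l+1))}\cdot C^l$, hence the number of $l$-exceptional residues is $q^{o(1)}$, and likewise the number of $a$ with $a^k$ exceptional for some $k\le K$ is $K q^{o(1)}$; with $K$ a fixed power of $\log$ or a small power of $d$ coming from the common differences, this is $d^{o(1)}$. Actually a cleaner route: the exceptional residues number at most $q^{1/2}$ (or much less), which already suffices. For each such $a$, I again condition on the small primes and apply the Littlewood--Offord bound Proposition \ref{pr:exceptional} to $Y = \sum_{p} X_p a^{p-1}$ (with $|I| \gg d/\log d$ primes in the dyadic interval by the prime number theorem), obtaining $\P[R(a)=0\mid \text{small primes}] \le 129|I|^{-1/2} \ll (d/\log d)^{-1/2}$, and averaging. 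Summing over the at most $q^{o(1)}$ exceptional $a$ gives a contribution $\le q^{o(1)} d^{-1/2+o(1)}$; since $q$ can be as large as we allow, I need to be a little careful, but the constraint that makes things work is that we only ever apply the main argument for $q$ in a range where $q^{o(1)}$ is absorbed into $d^{\e}$ — this is exactly the role of the hypothesis $l\ge\e^{-1}\log q/\log d$, which bounds $q \le d^{\e l}$ and hence controls all the $q^{1/(l+1)}$-type factors by $d^{O(\e)}$.

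Putting the two pieces together: $\E[B_R(q)] = \sum_{a \text{ non-exc}} \P[R(a)=0] + \sum_{a\text{ exc}} \P[R(a)=0]$. For the non-exceptional sum, $\P[R(a)=0] = \P[(R(a),R'(a))\in\{0\}\times\F_q]$, which by the equidistribution estimate is $q\cdot(q^{-2}+O(q^{-10})) = q^{-1} + O(q^{-9})$; summing over the $q - q^{o(1)}$ non-exceptional residues gives $1 + O(q^{-8}) + O(q^{o(1)}/q) = 1 + o(1)$. The exceptional sum is $O(q^{o(1)}\cdot d^{-1/2+o(1)})$, which under $q\le d^{\e l}$ with $l$ fixed is $O(d^{-1/2+\e})$ after adjusting $\e$. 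Hence $|\E[B_R(q)]-1| < d^{-1/2+\e}$. For double roots, the non-exceptional contribution is $\sum_{a\text{ non-exc}} \P[(R(a),R'(a))=(0,0)] = q\cdot(q^{-2}+O(q^{-10})) \cdot$ wait --- that is $q^{-1}+O(q^{-9})$ per the wrong normalization; more carefully it is $(q-q^{o(1)})(q^{-2}+O(q^{-10})) = q^{-1}+o(1) \to 0$, while the exceptional contribution is again $O(q^{o(1)}d^{-1/2+o(1)})$, so the total is $< d^{-1/2+\e}$. The main obstacle is the conditioning step: verifying that after fixing $X_p$ for $p<(d+1)/2$ the relevant coefficients are genuinely independent $\pm1$ variables indexed by a set containing enough disjoint progressions in the primes, and tracking that the hypothesis relating $l$, $q$ and $d$ is exactly strong enough to absorb all the $q^{O(1/(l+1))}$ losses into $d^{\e}$; the arithmetic-progressions-in-primes input is quoted, so the work is in bookkeeping rather than in new ideas.
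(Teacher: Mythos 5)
Your proposal is correct and follows essentially the same route as the paper: split $\F_q$ into residues $a$ such that $a^k$ is $l$-exceptional for some $k\le K$ and the rest, prove equidistribution of $(R(a),R'(a))$ at non-exceptional $a$ via Proposition \ref{pr:unexceptional} after conditioning on the coefficients governed by the small primes (with the many disjoint progressions in the primes of the top interval supplied by the quoted Green--Tao/Tao--Ziegler/Shao input), bound the exceptional residues' contribution by the Littlewood--Offord estimate of Proposition \ref{pr:exceptional}, and absorb all $q^{O(1/(l+1))}$-type losses into $d^{O(\e)}$ using $l\ge\e^{-1}\log q/\log d$, exactly as the paper does. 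The only slip is the parenthetical claim that a count of $q^{1/2}$ exceptional residues ``already suffices'' (with $q$ a power of $d$ that count would destroy the bound), but your actual bookkeeping uses the correct $q^{O(1/(l+1))}\le d^{O(\e)}$ count, matching the paper's $K^2 l C q^{1/(l+1)}$ estimate, so the argument stands.
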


We comment on the role of the parameter $l$ in the above statement and in the proof.
We will need to apply this proposition such that $q$ is a sufficiently large
power of $d$ so that the error term in the prime ideal theorem
(or in Proposition \ref{pr:number-distinct-factors} to be precise)
is smaller than our claimed estimate in the main theorem.
In fact, $q> d^5$ will be sufficient for this purpose.
We can then set $l$ depending on $\e$ and $\log q/\log d$.
For the proposition to hold, we need that $d$ and $q$ is large enough depending on $l$.
This is because $l$ ultimately controls the length of the arithmetic progressions
we need in the set $I$, so we need $d$ to be large enough to satisfy
the condition of the Green Tao theorem.

The proof follows easily from Propositions \ref{pr:unexceptional}
and \ref{pr:exceptional} and the
Theorem of Green and Tao \cite{GT} that the primes contain arbitrarily long arithmetic
progressions.
We will use the following version that provides control for the step size
of the progressions, which we recall from \cite{TZ}*{Theorem 5} and
\cite{Sha}*{Theorem 1.3}.

\begin{theorem}\label{th:Green-Tao}
For every $L\in\Z_{>0}$, there is a constant $C=C(L)$ such that the following
holds for all $d\in\Z_{>0}$ that is sufficiently large in terms of $L$.
Let $A$ be a subset of the primes in $[1,d]$ with $|A|>d/10\log d$.
Then $A$ contains an arithmetic progression of length $L$ with common difference
less than $C(\log d)^C$.
\end{theorem}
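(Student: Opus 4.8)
Theorem \ref{th:Green-Tao} is a \emph{localized, quantitative} form of the Green--Tao theorem, and my plan is to combine the relative Szemer\'edi (transference) principle with an argument that confines the common difference to a short window. First I would normalize: by the prime number theorem, the hypothesis $|A|>d/(10\log d)$ forces $A$ to have relative density at least $1/11$ among the primes in $[1,d]$, once $d$ is large. Next I would apply the $W$-trick: set $W=\prod_{p\le w}p$ with $w=w(d)$ growing slowly (so that $W\le(\log d)^{o(1)}$), pass to the reduced residue class $b\bmod W$ maximizing the density of $A$, and rescale $n\mapsto (n-b)/W$. Because every $L$-AP all of whose terms are $\equiv b\pmod W$ has common difference divisible by $W$, this is a faithful reduction, and it produces a set $A'\subseteq[1,d']$ with $d'\asymp d/W$ of density $\gg 1$ that is pointwise dominated, up to the usual $\log$-factor, by the Goldston--Y{\i}ld{\i}r{\i}m pseudorandom measure $\nu$ used in \cite{GT}.

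\textbf{Counting progressions with small common difference.} The heart of the matter is to produce, for $H:=C(\log d)^{C}$ with $C=C(L)$, a positive lower bound for
\[
\Lambda_{L,H}(A')\;:=\;\sum_{1\le r\le H/W}\ \sum_{n}\ \prod_{i=0}^{L-1}\mathbf{1}_{A'}(n+ir),
\]
the number of $L$-term progressions contained in $A'$ with common difference at most $H/W$. Here one re-runs the Green--Tao machinery --- Gowers-norm control of the generalized von Mangoldt function and the transference principle reducing to Szemer\'edi's theorem for the model measure --- but with two modifications: (i) the truncation parameter $R$ in the construction of $\nu$ is taken to be a fixed power $(\log d)^{A(L)}$ of $\log d$ rather than $d^{\eta}$; and (ii) the common difference $r$ is restricted to $[1,H/W]$ throughout. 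The linear-forms and correlation conditions for $\nu$ survive both changes --- the local densities that enter (at primes $p\le w$ and at the archimedean place) depend only on the congruence data modulo $W$ and on $L$, not on the magnitude of $r$ --- so the transference argument still gives $\Lambda_{L,H}(A')>0$ (indeed $\gg_{L}1$ after the natural normalization) provided $H$ exceeds a suitable power $(\log d)^{C(L)}$. Undoing the rescaling then yields an $L$-AP inside $A$ with common difference $\le H=C(\log d)^{C}$. This is exactly the content of \cite{TZ}*{Theorem 5} and \cite{Sha}*{Theorem 1.3}, which I would cite; the sketch above records why their hypotheses match ours.

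\textbf{Main obstacle.} The delicate step is the second one. The classical Green--Tao proof allows common differences as large as $d$, and a cheap pigeonholing cannot help: among all integer $L$-APs in $[1,d]$, those with common difference $\le H$ form only an $O(H/d)$-fraction, so the large-difference progressions are not negligible and must be discarded at the source. Forcing $r$ into a window of width merely $(\log d)^{C}$ therefore requires re-deriving the pseudorandom-measure estimates with the truncation taken as a power of $\log d$, re-verifying the Goldston--Y{\i}ld{\i}r{\i}m-type correlation bounds in that regime, and checking that restricting $r$ does not break the Gowers-norm control in the transference step. Everything else --- the passage to positive relative density among the primes, the $W$-trick, and the final descaling --- is routine bookkeeping.
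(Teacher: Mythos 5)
The paper does not prove this statement at all --- it is quoted verbatim from \cite{TZ}*{Theorem 5} and \cite{Sha}*{Theorem 1.3} --- so your proposal, which ultimately rests on citing those same results, matches the paper's treatment. Your surrounding sketch is a reasonable heuristic for why such a localized version holds, though note that the cited arguments do not simply take the Goldston--Y{\i}ld{\i}r{\i}m truncation to be a power of $\log d$; the restriction of the common difference to a polylogarithmic range is handled there via suitably localized (averaged) linear-forms/Gowers-norm machinery, but since the citation carries the statement this detail is not load-bearing.
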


We also need the following simple lemma to count the number of exceptional residues
for which Proposition \ref{pr:exceptional} will be applied.

\begin{lemma}\label{lm:number-exceptional}
For every $l$, there is a constant $C=C(l)$ such that the number of
$(l,q)$-exceptional polynomials is at most
$Cq^{1/(l+1)}$.
\end{lemma}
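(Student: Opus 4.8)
The plan is to bound the number of exceptional polynomials directly, by controlling their coefficients in terms of the Mahler measure. First I would recall the classical inequality relating the coefficients of a polynomial to its Mahler measure: if $P(x)=\sum_{i=0}^{d}a_i x^i\in\Z[x]$ has degree $d$, then $|a_i|\le\binom{d}{i}M(P)$ for every $i$. This is immediate from the factorization $P(x)=a_d\prod_{j}(x-\a_j)$: one has $a_i=\pm a_d\, e_{d-i}(\a_1,\dots,\a_d)$, and each elementary symmetric function is bounded by $\binom{d}{d-i}\prod_j\max(1,|\a_j|)$, so that $|a_i|\le\binom{d}{d-i}\,|a_d|\prod_j\max(1,|\a_j|)=\binom{d}{d-i}M(P)$.

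Given this, the argument is essentially a counting estimate. An $(l,q)$-exceptional polynomial $P$ has $\deg P\le l$ and $M(P)\le q^{1/(l+1)^2}$, hence each of its at most $l+1$ coefficients is an integer of absolute value at most $2^{l}M(P)\le 2^{l}q^{1/(l+1)^2}$. Therefore the number of such polynomials is at most $\big(2^{l+1}q^{1/(l+1)^2}+1\big)^{l+1}$. Since $\big(q^{1/(l+1)^2}\big)^{l+1}=q^{1/(l+1)}$, for $q$ larger than a threshold depending only on $l$ this quantity is at most $2^{(l+1)(l+2)}q^{1/(l+1)}$; enlarging the constant $C=C(l)$ to absorb the finitely many smaller primes then yields the bound $Cq^{1/(l+1)}$ for every prime $q$, as claimed.

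I do not expect any real obstacle: the only piece of content is the coefficient bound, which is classical (Landau's inequality), and everything else is bookkeeping of the exponent $(l+1)\cdot\tfrac{1}{(l+1)^2}=\tfrac{1}{l+1}$. If one wished to avoid invoking Landau's inequality, one could alternatively note that a polynomial of degree at most $l$ is determined by its values at the $l+1$ integers $0,1,\dots,l$, and bound $|P(n)|\le\prod_j\big(|n|+\max(1,|\a_j|)\big)\le (l+1)^{l}M(P)$ for $0\le n\le l$, which produces the same count up to the value of $C(l)$.
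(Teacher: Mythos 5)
Your argument is correct and matches the paper's proof: both bound each of the at most $l+1$ coefficients by a constant (depending only on $l$) times the Mahler measure, hence by $C(l)q^{1/(l+1)^2}$, and then count integer coefficient vectors, raising that bound to the power $l+1$ to get $Cq^{1/(l+1)}$. The only difference is that you spell out the coefficient-versus-Mahler-measure inequality (Landau's bound via elementary symmetric functions), which the paper simply asserts.
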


\begin{proof}
The coefficients of a polynomial $Q\in\Z[x]$
of degree at most $l$
are bounded by $C M(Q)$ for some constant $C$ depending only on $l$.
Therefore, an $(l,q)$-exceptional polynomial has coefficients bounded by
$Cq^{1/(l+1)^2}$ due to the definition of being $(l, q)$-exceptional. The claim follows by raising this bound to the power $l+1$.
\end{proof}

\begin{proof}[Proof of Proposition \ref{pr:number-roots-random-poly}]
Let $K=C_1(\log d)^{C_1}$,
where $C_1$ is the constant $C$
in Theorem \ref{th:Green-Tao}
applied with $L=3l^3$.
We first consider the probability that some $a\in\F_q$
is a root or a double root of $R$
under the condition that $a^k$ is not $l$-exceptional for any $1\le k\le K$.
To this end, we will apply Proposition \ref{pr:unexceptional} with the set of
primes in $[d/2,d]$ in the role of $I$.

We first show that $I$ contains the required number of arithmetic progressions.
We apply Theorem \ref{th:Green-Tao} repeatedly to find arithmetic progressions
of length $3l^3$ with common difference at most $K$.
We start this with all primes in $[d/2,d]$ in the role of $A$ at first, then
we remove from $A$ all arithmetic progressions that we find to apply
Theorem \ref{th:Green-Tao} again for this reduced set.

This process can be run more than $d/(10\log(d) l^3)$ times before the number
of elements in $A$ falls below the threshold in the theorem, so
we find at least this many arithmetic progressions.
Since $l\ge 5\log q/\log d$,
\[
q^{5/(l+1)}\le \frac{d}{10\log(d) l^3}, 
\]
and we have enough arithmetic progressions to apply Proposition~\ref{pr:unexceptional}.

Next we write
\[
(R(a),R'(a))= Y+Z,
\]
where
\[
Y=\sum_{i\in I}X_{i}(a^{i-1},(i-1)a^{i-2})
\]
and
\[
Z=\sum_{i\in [1,d+1]\backslash I}X_{i}(a^{i-1},(i-1)a^{i-2}).
\]
We observe that $Y$ and $Z$ are independent. So, conditioning on the value of $Z$, we can write
\[
\Big|\P[R(a)=x_1,R'(a)=x_2]-\frac{1}{q^2}\Big|
\le \max_{z\in\F_q^2}\Big|\P[Y=(x_1,x_2)-z]-\frac{1}{q^2}\Big|
< q^{-10}
\]
for all $x_1,x_2\in\F_q$, where we applied Proposition~\ref{pr:unexceptional}.
We conclude
\begin{align}
\Big|\P[R(a)=0]-\frac{1}{q}\Big|&\le q^{-9}\label{eq:unexceptional1}\\
\P[\text{$a$ is a double root of $R$}]\le \frac{2}{q^2}.\label{eq:unexceptional2}
\end{align}

Now we prove a bound that is valid for all $a\in\F_q^{\times}$.
We write $R(a)=Y_1+Z_1$, where $Y_1$ and $Z_1$ are the first components
of the vectors $Y$ and $Z$ defined above.
Conditioning on the value of $Z_1$ and then using
Proposition \ref{pr:exceptional} we can write
\[
\P[R(a)=0]\le \max_{z\in\F_q}\P[Y_1=-z]\le 129 |I|^{-1/2}\le 1000 (d/\log d) ^{-1/2}.
\]
Note that $R(0)=1$ almost surely, so the probability that $0$ is a root is $0$, so
the above bound is valid even for $a=0$.

The number of elements $a\in\F_q$ for which the bounds \eqref{eq:unexceptional1}
and \eqref{eq:unexceptional2} do not apply is at most
$K^2lC_2q^{1/(l+1)}$, where $C_2$ is the constant
$C$ in Lemma \ref{lm:number-exceptional}.
We see that the expected number of exceptional roots of $R$ is less than
\[
1000C_1^2C_2l(\log d)^{2C_1+1/2}q^{1/(l+1)} d^{-1/2}<d^{-1/2+\e}/2
\]
because $l\ge \e^{-1}\log q/\log d$.

Summing up \eqref{eq:unexceptional2} for $a\in\F_p$ that are not exceptional
and combining with the above bound, we get
\[
\E[|\{a\in\F_q: \text{$a$ is a double root of $R$}\}|]\le \frac{2}{q}+d^{-1/2+\e}/2
< d^{-1/2+\e}.
\]

Summing up \eqref{eq:unexceptional1} and combining with the bound for exceptional
roots, we get
\[
\Big|\E[|\{a\in\F_q: R(a)=0\}|]-\frac{q}{q}\Big|
\le \frac{K^2lC_2q^{1/(l+1)}}{q}+q^{-8}+ d^{-1/2+\e}/2<d^{-1/2+\e}.
\]
\end{proof}

\section{Proof of Theorem \ref{thm: random}}
\label{sc:proof-thm}

Fix some $\e>0$, and an integer $l>5\e^{-1}$.
Let $d$ be sufficiently large in terms of $l$, and let
$X=5\log d$. Recall $R(x)=P_{d+1}(x)/x$. 
Proposition~\ref{pr:number-roots-random-poly} gives
\begin{equation}\label{eq:BPq}
|\E[B_R(q)]-1|< d^{-1/2+\e}
\end{equation}
for all primes $q$ with $\log q\in[X-\log 2,X]$.

Applying Proposition \ref{pr:prime-number-theorem} for $K=\Q$, we get
\begin{equation}\label{eq:weights}
\sum_{\text{$q$ prime}} \log (q) h_X(\log q)= 1+ O(X^2\exp(-X/2))
\end{equation}
Summing up \eqref{eq:BPq}, we get
\begin{align*}
\sum_{\text{$q$ prime}} B_R(q)\log(q) h_X(\log q)
&=1+O(X^2\exp(-X/2))+O(d^{-1/2+\e})\\
&=1+O(d^{-1/2+\e}).
\end{align*}

Now we apply Proposition \ref{pr:number-distinct-factors} with $M=1$
and get
\begin{align*}
\E[|\{\text{distinct} &\text{ irreducible factors of $R$}\}|]\\
&=1+O(d^{-1/2+\e})+ O(d^2 X^2\log(d)\exp(-X/2))\\
&=1+O(d^{-1/2+\e}).
\end{align*}
Since $R$ has always at least $1$ irreducible factor, Markov's inequality
implies that the probability that it has more than $1$ is less than
$C d^{-1/2+\e}$.

It remains to prove that $R$ is not a proper power of a single irreducible polynomial
with high probability.
Write $\wt B_R(q)$ for the number of elements of $\F_q$ that are roots of
$R$ with multiplicity at least $2$.
When $R$ is a proper power, $B_R(q)=\wt B_R(q)$.

Using that $R$ always has at least one irreducible factor, it follows
from Proposition \ref{pr:number-distinct-factors} that
\[
\sum_{\text{$q$ prime}} B_R(q)\log(q)h_X(\log q) \ge 1/2
\]
for all $R$.
Therefore,
\[
\sum_{\text{$q$ prime}} \E[\wt B_R(q)]\log(g)h_X(\log q)
\ge\P[\text{$R$ is a proper power}]/2.
\]
Using Proposition \ref{pr:number-roots-random-poly} and \eqref{eq:weights},
we get
\[
2 d^{-1/2+\e}>\P[\text{$R$ is a proper power}]/2,
\]
as required.

\section{Proof of Corollary~\ref{cr:pd}}
\label{sc:proof-cor}

We deduce the corollary from the main theorem and the following result
about the distribution of Legendre symbols due to Granville and Soundararajan.

\begin{theorem}[\cite{GS}*{Proposition 9.1}]\label{th:GS}
Suppose the Riemann hypothesis holds for all Dirichlet $L$-functions.
Let $x,d\in\Z_{>0}$.
Let $\omega_r=\pm1$ for each prime $r\le d$, and let
$\cP(x,\{\omega_r\})$ denote the set of primes $p\le x$ such that
$\big(\frac{r}{p}\big)=\omega_r$ for each $r\le d$.
Then for $d\le x^{1/2}$, we have
\[
\sum_{p\in\cP(x,\{\omega_r\})}\log p = \frac{x}{2^{\pi(d)}}+O(x^{1/2}(d+\log x)^2).
\]
\end{theorem}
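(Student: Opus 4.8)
The plan is to expand the congruence-type condition on $p$ into an average of Dirichlet character values and then apply the conditional prime number theorem to each resulting character sum. Write $\mathcal R$ for the set of primes $r\le d$, so $|\mathcal R|=\pi(d)$, and for a prime $p$ let $\nu(p)$ be the indicator of the event $\left(\frac rp\right)=\omega_r$ for all $r\in\mathcal R$, so that $\sum_{p\in\cP(x,\{\omega_r\})}\log p=\sum_{p\le x}\nu(p)\log p$. If $p>d$ and $r\le d$ then $p\nmid r$, so $\left(\frac rp\right)\in\{\pm1\}$ and $\mathbf 1\!\left[\left(\tfrac rp\right)=\omega_r\right]=\tfrac12\bigl(1+\omega_r\left(\tfrac rp\right)\bigr)$. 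Multiplying over $r\in\mathcal R$ and expanding,
\[
\nu(p)=\frac{1}{2^{\pi(d)}}\sum_{S\subseteq\mathcal R}\eta_S\Bigl(\frac{n_S}{p}\Bigr)
\qquad\text{for all primes }p>d,
\]
where $n_S:=\prod_{r\in S}r$ is squarefree and $\eta_S:=\prod_{r\in S}\omega_r\in\{\pm1\}$. The primes $p\le d$ contribute only $O(\theta(d))=O(d)$ to $\sum_{p\le x}\nu(p)\log p$, and since $d\le x^{1/2}$ this is $O(x^{1/2}(d+\log x)^2)$; so it suffices to evaluate $\sum_{d<p\le x}\nu(p)\log p$.

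The key step is to recognise $p\mapsto\left(\frac{n_S}{p}\right)$ as the restriction to primes of a real Dirichlet character. By quadratic reciprocity — the factor $\left(\frac 2p\right)$ being a character modulo $8$, and $\left(\frac mp\right)=\left(\frac pm\right)(-1)^{\frac{p-1}{2}\frac{m-1}{2}}$ for odd $m$ — there is a real Dirichlet character $\chi_S$ of modulus $q_S$ dividing $8n_S$ with $\chi_S(p)=\left(\frac{n_S}{p}\right)$ whenever $p\nmid 2n_S$; in particular $\chi_S(p)=\left(\frac{n_S}{p}\right)$ for all $p>d$. By Chebyshev's elementary bound, $n_S\le\prod_{r\le d}r=e^{\theta(d)}\le e^{Cd}$, hence $\log q_S\ll d$. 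Finally $\chi_S$ is non-principal whenever $S\neq\emptyset$ (a squarefree $n_S>1$ is a quadratic non-residue modulo infinitely many primes), while $\chi_{\emptyset}$ is the trivial character.

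Now I would invoke the GRH-conditional estimate for character sums: for a primitive Dirichlet character $\chi$ modulo $Q$, RH for $L(s,\chi)$ gives, via the explicit formula, $\sum_{n\le x}\Lambda(n)\chi(n)=\delta_\chi\,x+O\bigl(x^{1/2}(\log Qx)^2\bigr)$, where $\Lambda$ is the von Mangoldt function and $\delta_\chi=1$ if $\chi$ is principal and $0$ otherwise. Removing prime powers and replacing $\chi_S$ by its primitive inducing character introduce errors $O(x^{1/2}\log x)$ and $O(\log q_S\log x)$ respectively, both absorbed, so (using $\log q_S\ll d$, and RH for the Riemann zeta function in the case $S=\emptyset$)
\[
\sum_{d<p\le x}\chi_S(p)\log p=\delta_{S,\emptyset}\,x+O\bigl(x^{1/2}(d+\log x)^2\bigr).
\]
Summing the first-paragraph identity over $d<p\le x$, interchanging the two sums, substituting this estimate into each of the $2^{\pi(d)}$ terms, and using $|\eta_S|=1$, we obtain
\begin{align*}
\sum_{p\in\cP(x,\{\omega_r\})}\log p
&=\frac{1}{2^{\pi(d)}}\Bigl(x+2^{\pi(d)}\cdot O\bigl(x^{1/2}(d+\log x)^2\bigr)\Bigr)+O(d)\\
&=\frac{x}{2^{\pi(d)}}+O\bigl(x^{1/2}(d+\log x)^2\bigr),
\end{align*}
since the factor $2^{\pi(d)}$ from the number of subsets cancels the prefactor.

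The analytic input here — the bound $\sum_{n\le x}\Lambda(n)\chi(n)=\delta_\chi x+O(x^{1/2}(\log Qx)^2)$ under RH, together with the standard imprimitive-character and prime-power bookkeeping — is routine. The only step that genuinely carries the argument is the character identification combined with the \emph{elementary} Chebyshev estimate $\theta(d)\ll d$: this is what forces $\log q_S=O(d)$ rather than something larger, and hence what lets the per-character error $x^{1/2}(d+\log x)^2$ survive the cancellation of the $2^{\pi(d)}$ factor coming from the number of subsets of $\mathcal R$.
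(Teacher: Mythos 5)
Your proof is correct. The paper does not prove this statement --- it quotes it directly from \cite{GS}*{Proposition 9.1} --- and your argument (expanding the indicator $\prod_{r\le d}\tfrac12(1+\omega_r(\tfrac rp))$ into $2^{\pi(d)}$ real characters of conductor $O(e^{Cd})$, applying the GRH bound $\psi(x,\chi)=\delta_\chi x+O(x^{1/2}(\log Qx)^2)$ to each, and letting the $2^{-\pi(d)}$ prefactor cancel the number of subsets) is precisely the standard proof underlying that citation, with the bookkeeping for $p\le d$, prime powers, and imprimitive characters handled correctly.
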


\begin{proof}[Proof of Corollary \ref{cr:pd}]
In the statement of the corollary, the prime $p$ is selected from $[d+1,f(d)]$
uniformly.
We modify the distribution of $p$ in such a way that $p$ is selected from
$[1,f(d)]$ with probability
\begin{equation}\label{eq:modified-distribution}
\frac{\log p}{\sum_{p\in[1,f(d)]}\log p}\le C \frac{\log p}{f(d)}.
\end{equation}
This does not affect the the validity of the conclusion for the following reason.
The probability that $p<f(d)^{1/10}$ is smaller than $C d^{-9/10}$ in both models,
which is much smaller than the claimed bound for the probability that $F_{d,f}$ is reducible,
so we may ignore this event.
Conditioning on $p> f(d)^{1/10}$ the ratio between the probabilities that $p$ takes a given value
in the two models is bounded between two absolute constants, so the probability of
the event of reducibility will also only change by a constant factor.
For the rest of the proof, we consider $p$ to be random with distribution \eqref{eq:modified-distribution}.

To prove the corollary, it is enough to show that for any fixed
$\{\omega_r\}\in\{-1,1\}^{\text{$r\le d$ prime}}$ the probability that
$\big(\frac{r}{p}\big)=\omega_r$ for all $r$ is bounded by an absolute
constant times
\[
\P[\text{$X_r=\omega_r$ for all $r$}]=2^{-\pi(d)}.
\]
Therefore, it is enough to show that
\[
\sum_{p\in\cP(f(d),\{\omega_r\})}\frac{\log p}{f(d)} \le \frac{C}{2^{\pi(d)}}
\]
for some constant $C>0$.
This clearly follows from Theorem \ref{th:GS}.
\end{proof}

 \bibliographystyle{abbrv}
	\bibliography{Fekete}{}
\end{document}